\newtheorem{thm}{Theorem}[section]
\newtheorem{cor}[thm]{Corollary}
\newtheorem{lem}[thm]{Lemma}
\newtheorem{defi}[thm]{Definition}
\theoremstyle{definition}
\newtheorem{rem}[thm]{Remark}
\newtheorem{ex}[thm]{Example}
\DeclareMathOperator{\II}{\textit{II}}
\newcommand{\R}{\mathbb{R}}
\newcommand{\supp}{\textrm{supp}}
\newcommand{\La}{\mathbb{L}}
\newcommand{\eps}{\varepsilon}
\DeclareMathOperator{\indre}{int}
\DeclareMathOperator{\tr}{Tr}
\newcommand{\Ha}{\mathcal{H}}
\begin{document}

\section*{Estimation of Minkowski tensors from digital grey-scale images}

\begin{abstract}
It was shown in \cite{am4} that local algorithms based on grey-scale images sometimes lead to asymptotically unbiased estimators for surface area and integrated mean curvature. This paper extends the results to estimators for Minkowski tensors. In particular, asymptotically unbiased local algorithms for estimation of all volume and surface tensors and certain mean curvature tensors are given. This requires an extension of the asymptotic formulas of \cite{am4} to estimators with position dependent weights. 
\end{abstract}

\section{Introduction}
Minkowski tensors \cite{hug,schuster} are generalizations of Minkowski functionals \cite{schneider}, associating to a compact convex body $X\subseteq \R^d$ a symmetric tensor, rather than a scalar. They  carry information about shape features of $X$ such as position, anisotropy, and eccentricity. They are used as shape descriptors in statistical physics.  For instance, in \cite{aste} Minkowski tensors are used to detect anisotropy in spherical bead packs. See e.g.\ \cite{mickel} for an overview.

Since the data is often of digital nature, there is a need for fast digital algorithms to estimate tensors. Such algorithms are suggested in \cite{turk, mecke}. These algorithms are all of local type, see \cite{am3}, based on black-and-white images. 

It is well known that local algorithms for Minkowski functionals based on black-and-white images are generally biased \cite{kampf2,am3}. The situation seems to be the same for most Minkowski tensors. Since most black-and-white images occur as thresholded grey-scale images, the focus has switched to algorithms based directly on grey-scale images without thresholding where recent results \cite{am4} show the existence of asymptotically unbiased algorithms for surface area and integrated mean curvature. Grey-scale images and local estimators are explained in  Section \ref{gssec}.

Surface area and integrated mean curvature can be estimated using only $1\times \dotsm \times 1$ configurations, whereas larger $n \times \dots \times n$ configurations are needed in order to gain information about surface normals. Moreover, position dependent weights are needed in order to get information about position. 
This requires a slight extension of the known results about the asymptotic behaviour of local algorithms. These follow fairly easily from the technical lemmas in \cite{am4}. The theoretical results are given in Section \ref{1sec}. 

The estimation of Minkowski tensors is the topic of Section \ref{mink}. The formal definition of the tensors is given in Subsection \ref{tens}. The subsequent subsections introduce local estimators for volume, surface, and certain mean curvature tensors. The algorithms are asymptotically unbiased, i.e.\ they converge when the resolution tends to infinity and the point spread function (PSF) becomes concenteated near the boundary. In particular, we obtain a complete set of estimators for the Minkowski tensors in 2D. The algorithms require that the PSF is known; at least the knowledge of what a blurred halfspace looks like is required.  Moreover, the resolution has to be sufficiently high compared to the support of the PSF.

\section{Local estimators from grey-scale images} \label{gssec}

\subsection{Grey-scale images}
Let $X \subseteq \R^d$ be the compact set we are observing. 
We assume that the light coming from each point is spread out following a point spread function which is independent of the position of the point. Hence the light that reaches the observer is given by the intensity function
\begin{equation*}
\theta^{X} : \R^d \to [0,1]
\end{equation*}
where the intensity measured at $x\in \R^d$ is given by
\begin{equation*}
\theta^{X}(x)= \int_{X} \rho(z-x) dz.
\end{equation*}
In other words $\theta^{X}$ the convolution $\mathds{1}_X * \rho $ of the indicator function for $X$ with a PSF $\rho$.
The PSF is assumed to be a measurable function satisfying 
\begin{itemize}
\item $\rho \geq 0$.
\item $\int_{\R^d} \rho(z) dz =1$.
\end{itemize}
We say that a PSF is rotation invariant if $\rho(x)=\rho(|x|)$ depends only on $|x|$.

A digital grey-scale image is the restriction of $\theta^{X}$ to an observation lattice $\La$.
A change of resolution corresponds to a change of lattice from $\La$ to $a\La$ for some $a>0$. We assume that the precision of the measurements changes with resolution in such a way that the PSF corresponding to $a\La$ is
\begin{equation*}
\rho_a(x)=a^{-d}\rho(a^{-1}x),
\end{equation*}
see the discussion in \cite[Section 2.1]{am4}.
The corresponding intensity function is denoted
\begin{equation*}
\theta_a^{X}(x)= \int_{X} \rho_a(z-x) dz =a^{-d} \int_{X} \rho(a^{-1}(z-x)) dz.
\end{equation*}

In applications, the PSF is typically the Gaussian function \cite{kothe} or the Airy disk \cite{airy}. These are smooth and rotation invariant but do not have compact support. Another important example is $\rho_B = \Ha^d(B)^{-1}\mathds{1}_{B}$ where $B \subseteq \R^d$ is a compact set of non-zero finite volume $\Ha^{d}(B)$. In this case, we measure at each $z\in \La$ the fraction of $z+B$ covered by $X$.  Such PSF's have compact support but are not continuous.

\subsection{A blurred halfspace}
For $u\in S^{d-1}$ and $\alpha \in \R$ write $H_{\alpha, u}^- = \{x\in \R^d \mid \langle x, u \rangle \leq \alpha \}$ for the halfspace.  The intensity function associated to a halfspace in standard resolution will play a special role in the following. Hence we introduce the separate notation 
\begin{equation*}
\theta_u(t):=\theta^{H_{0,u}^-}_1(tu).
\end{equation*}

A geometric interpretation of $\theta_u$ is illustrated in Figure \ref{Hfig}.

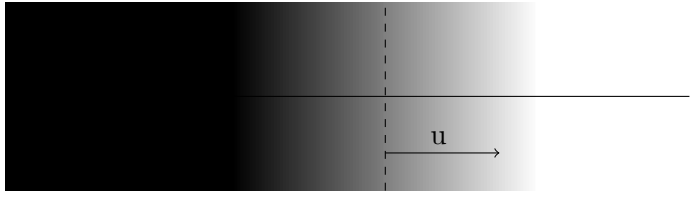
\begin{figure}\label{Hfig}
\begin{equation*}
\begin{tikzpicture}
\shade[left color=black,right color=black]  (0,0) rectangle (7,2.5);
\shade[left color=black,right color=white] (3,0) rectangle (7,2.5);
\draw[->] (5,.5) -- (6.5,.5);
\draw (0,1.25) -- (9,1.25);
\draw (5.7,.7) node{u};
\draw[dashed] (5,0) -- (5,2.5);
\end{tikzpicture}
\end{equation*}
\caption{A grey-scale image of a halfspace. The function $\theta_u$ measures how the grey-values change along the horizontal line going from right to left.}
\end{figure}

\begin{ex}
 If $\rho$ is the standard Gaussian
\begin{equation*}
\rho(x) = (2\pi)^{-\frac{d}{2}}e^{-\frac{1}{2}|x|^2},
\end{equation*} 
then 
\begin{equation*}
\theta_u(t) = \int_{u^\perp} \int_{-\infty}^{0} \rho(tu-z-su) ds dz = \Phi(-t)
\end{equation*}
where $\Phi$ is the distribution function for the standard 1-dimensional normal distribution.
\end{ex}

Note for later that
\begin{equation*}
\theta_a^{H_{0,u}^-}(ax)=\theta^{H_{0,u}^-}_1(\langle x, u \rangle u) =\theta_u(\langle x, u \rangle)
\end{equation*}
independently of $a$.

\subsection{Local algorithms in the grey-scale setting}\label{lag-s}

Let $\La$ be a lattice in $\R^d$ spanned by the ordered basis $v_1,\dots,v_d\in \R^d$ and let  $C_v=\bigoplus_{i=1}^d[0,v_i)$ be the fundamental cell of the lattice. As we shall later be scaling the lattice, we may as well assume that the volume $\det(v_1,\dots,v_d)$ of $C_v$ is 1. For $c\in \R^d$, we let $\La_c=\La+c$ denote the lattice translated by $c$ and $a\La_c$ the scaling of $\La_c$ by $a>0$. 

A fundamental $n\times \dotsm \times n$ lattice block is $C_{w,0}^n=(w+nC_v) \cap \La$ for some fixed $w\in \La$. More generally we consider its translations $C_{w,z}^n=z + C_{w,0}^n$ by $z\in \R^d$. We denote by $[0,1]^{C_{w,0}^n}$ the set of $n^d$-tuples of points in $[0,1]$ indexed by $C_{w,0}^n$. A point is written $\{\theta_s\}_{s\in C_{w,0}^n}$. The restriction of $\theta_a^X$ to $aC_{w,z}^n$ naturally defines a point in $[0,1]^{C_{w,0}^n}$ which we denote by $\Theta_a^X(az;aC^n_{w,0})=\{\theta_a^X(az+as))\}_{s\in C^n_{w,0}}$. 

\begin{defi}\label{greyest}
A local algorithm $\hat{\Phi}_q^f$ is an estimator of the form 
\begin{equation}\label{sumest}
\hat{\Phi}_q^{f}(X)=a^q \sum_{z \in \La_c} f(\Theta^X_a(az;aC_{w,0}^n),z)
\end{equation}
where $f:[0,1]^{C_{w,0}^n } \times \R^d \to \R$ is a Borel function. We assume that the support of $f$ is contained in $A\times \R^d$ where $A\subseteq (0,1)^{C_{w,0}^n}$ is compact and that $f$ is bounded on compact sets.
\end{defi}
The assumptions on $f $ make the sum \eqref{sumest} finite and $z\mapsto f ( \Theta^X_a(az;aC_{w,0}^n),z)$ integrable whenever $X$ is compact.

We assume that the lattice is stationary random, i.e.\ we consider the lattice $\La_c = \La +c$ where $c\in C_v$ is uniform random. Then the mean estimator is
\begin{equation}\label{mean}
E\hat{\Phi}^f_q(X)=a^q E\sum_{z\in \La_c} f(\Theta^X_a(az;aC_{w,0}^n),az) = a^{q-d}\int_{\R^d} f ( \Theta^X_a(z;aC_{w,0}^n),z) dz.
\end{equation}

As a natural convergence criterion, we take the following:
\begin{defi}
A local algorithm is an asymptotically unbiased estimator for $\Phi(X)$ if $\lim_{a\to 0} E\hat{\Phi}^f_q(X)=\Phi(X)$.
\end{defi}

\subsection{The relevant set-classes}
In order to prove the formulas, we need to make some assumptions on $X$. 
First some notation. 
For a closed set $X\subseteq \R^d$, we let $\text{exo}(X)$ denote the points in $\R^d$ not having a unique nearest point in $X$. Let $\xi_X : \R^d\backslash \text{exo}(X) \to X$ be the natural projection taking a point in $\R^d\backslash \text{exo}(X)$ to its nearest point in $X$. We define the normal bundle of $X$ to be the set
\begin{equation*}
N(X)=\big\{\big(x,\tfrac{z-x}{|z-x|}\big)\in X\times S^{d-1}\, \big|\, z\in \R^d\backslash (X\cup \text{exo}(X)),\, \xi_X(z)=x \big\}.
\end{equation*}
For $(x,u)\in N(X)$ we define the reach
\begin{equation*}
\delta(X;x,u)=\inf\{t\geq 0 \mid x+tu \in \text{exo}(X)\}\in (0,\infty].
\end{equation*}

Let $\Ha^k$ denote the $k$-dimensional Hausdorff measure. Following \cite{rataj}, we introduce the class of gentle sets:
\begin{defi}\label{gentle}
A closed set $X\subseteq \R^d $ is called gentle if
\begin{itemize}
\item[(i)] $\Ha^{d-1}(N(\partial X) \cap (B \times S^{d-1}))<\infty$ for any bounded Borel set $B\subseteq \R^d$.
\item[(ii)] For $\Ha^{d-1}$-almost all $x\in \partial X$ there exist two balls $B_{in},B_{out}\subseteq \R^d$ both containing $x$ and such that $B_{in}\subseteq X$, $\indre(B_{out})\subseteq \R^d \backslash X$.
\end{itemize}
\end{defi}
The condition (ii) in the definition means that for almost all $x\in \partial X$ there is a unique pair $(x,u(x))\in N(X)$ with $(x,u(x)),(x,-u(x))\in N(\partial X)$. This class is quite general, including for instance all $C^1$ manifolds and all polyconvex sets satisfying a certain regularity condition, see \cite{rataj}.

We shall also consider the subclass of $r$-regular sets:
\begin{defi}
A gentle set $X\subseteq \R^d$ is called $r$-regular for some $r>0$, if the balls  $B_{in}$ and $B_{out}$ exist for every $x\in \partial X$ and can be chosen to have radius $r$.
\end{defi}
Being $r$-regular is slightly weaker than being a $C^2$ manifold.

It can be proved \cite{federer},  that if $X$ is $r$-regular, then $\partial X$ is a $C^1$ manifold with $\Ha^{d-1}$-a.e.\ dif\-fe\-ren\-tia\-ble normal vector field $u$. Thus its principal curvatures $k_1,\dots, k_{d-1}$, corresponding to the orthogonal principal directions $e_1,\dots,e_{d-1}\in T\partial X$, can be defined a.e.\  as the eigenvalues of the differential $du$. 
Hence the second fundamental form $\II_x$ on the tangent space $T_x\partial X$  is defined for $\Ha^{d-1}$-a.a.\ $x\in \partial X$. For $\sum_{i=1}^{d-1}\alpha_ie_i\in T_x\partial X$, $\II_x$ is the quadratic form given by
\begin{equation*}
\II_x\left(\sum_{i=1}^{d-1}\alpha_ie_i \right)= \sum_{i=1}^{d-1}k_i(x)\alpha_i^2
\end{equation*}
whenever $d_xu$ is defined. In particular, the trace is $\tr  (\II) =k_1+\dotsm+k_{d-1}$. Note for later that $r$-regularity ensures that $k_1,\dots,k_{d-1}\leq r^{-1}$. 

The $(d-2)$nd curvature measure of $X$ is defined \cite{federer} for $r$-regular sets by 
\begin{equation*}
C_{d-2}(X;A)=\frac{1}{2\pi} \int_{\partial X\cap A } \tr(\II) d\Ha^{d-1}
\end{equation*}
for all Borel sets $A\subseteq \R^d$.

\section{Asymptotic formulas}\label{1sec}
\subsection{First order formulas}

The following notation will be used in the proofs.
For a finite set $S$ and an interval $I$, we denote by $I^S$ the $|S|$-tuples $\{\theta_s\}_{s\in S}$ of points $\theta_s \in I$ indexed by $S$.
Given a finite set $S\subseteq \R^d$ we write
\begin{align*}
\Theta_a^X(x;S)&=\{\theta_a^X(x+s)\}_{s\in S} \in [0,1]^S\\
\Theta_u(t;S)&=\{\theta_u(t+\langle s, u \rangle)\}_{s\in S}  \in [0,1]^S.
 \end{align*}

For $x\in \partial X$ understood and $u$ an outward pointing normal, we also write $H_u:=H_{\langle x,u\rangle,u}^-$ for the supporting halfspace.
Note that
\begin{align*}
\theta_a^{H_u}(x + a(tu +s)){}&=\theta^{H_{0,u}^-}_a(a(t+\langle s, u\rangle)u) = \theta_u(t+\langle s,u \rangle)\\
\Theta_u(t;S){}&= \{\theta_a^{H_u}(x+a(tu+s))\}_{s\in S}.
\end{align*}

The proofs follow from the following lemma shown in \cite[Lemma 7.1 and 7.2]{am4}:
\begin{lem}\label{thetadist}
Suppose $X$ is gentle and $\rho$ is a bounded PSF. Let $D>0$. Then for a.a.\ $x\in \partial X$,
\begin{equation*}
\sup\{|\theta^X_a(x+atu+as)-\theta_u(t+\langle s, u \rangle)| \mid t\in [-D,D], s\in B(D)\}\in o(1).
\end{equation*}
\end{lem}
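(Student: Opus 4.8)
The plan is to work with the rescaled set $a^{-1}(X-x)$ and exploit that, by gentleness, for $\Ha^{d-1}$-almost every $x\in\partial X$ this blow-up is squeezed between the blow-ups of an inner and an outer tangent ball, both of which flatten to the supporting halfspace $H_{0,u}^-$ as $a\to 0$. First I would substitute $z=x+aw$ in the integral defining $\theta_a$, which gives $\theta_a^X(y)=\int_{\R^d}\rho(w)\,\mathds{1}_{a^{-1}(X-y)}(w)\,dw$ and hence
\[
\theta_a^X(x+atu+as)=\int_{\R^d}\rho(w)\,\mathds{1}_{a^{-1}(X-x)}(w+tu+s)\,dw .
\]
Unwinding $\theta_u(r)=\theta^{H_{0,u}^-}_1(ru)$ by a further substitution gives $\theta_u(r)=\int_{\R^d}\rho(w)\,\mathds{1}[\langle w,u\rangle\le-r]\,dw$, so $\theta_u(t+\langle s,u\rangle)=\int_{\R^d}\rho(w)\,\mathds{1}_{H_{0,u}^-}(w+tu+s)\,dw$. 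Writing $p:=tu+s$, which satisfies $|p|\le 2D$ as $t\in[-D,D]$ and $s\in B(D)$, and using $\rho\ge 0$, the supremum in the statement is then at most
\[
\sup_{|p|\le 2D}\ \int_{\R^d}\rho(w)\,\bigl|\mathds{1}_{a^{-1}(X-x)}(w+p)-\mathds{1}_{H_{0,u}^-}(w+p)\bigr|\,dw .
\]

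Next, given $\eps>0$, I would choose $M$ with $\int_{|w|>M-2D}\rho(w)\,dw<\eps$, which is possible since $\rho\in L^1$. For a fixed $p$, substituting $w\mapsto w-p$ and splitting the integral at $|w|=M$, the part with $|w|>M$ is at most $\int_{|w|>M-2D}\rho(w)\,dw<\eps$ uniformly in $p$, while on $|w|\le M$ the boundedness of $\rho$ bounds the remainder by $\|\rho\|_\infty\,\Ha^d\bigl(B(M)\cap(a^{-1}(X-x)\triangle H_{0,u}^-)\bigr)$, which no longer depends on $p$. So everything reduces to showing, for the fixed $x$ and $M$, that $\Ha^d\bigl(B(M)\cap(a^{-1}(X-x)\triangle H_{0,u}^-)\bigr)\to 0$ as $a\to0$.

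This is where gentleness enters. For $\Ha^{d-1}$-almost every $x$ there are an inner ball $B_{in}=B(x-ru(x),r)\subseteq X$ and an outer ball $B_{out}=B(x+r'u(x),r')$ with $\indre(B_{out})\subseteq\R^d\setminus X$, where $u=u(x)$ is the normal, so that $a^{-1}(B_{in}-x)\subseteq a^{-1}(X-x)\subseteq\bigl(a^{-1}(\indre(B_{out})-x)\bigr)^c$; since $A_1\subseteq A\subseteq A_2$ forces $A\triangle H\subseteq(A_1\triangle H)\cup(A_2\triangle H)$, it suffices to treat the two explicit sets. A direct computation gives $a^{-1}(B_{in}-x)=\{v:\langle v,u\rangle\le-\tfrac{a}{2r}|v|^2\}\subseteq H_{0,u}^-$ and $\bigl(a^{-1}(\indre(B_{out})-x)\bigr)^c=\{v:\langle v,u\rangle\le\tfrac{a}{2r'}|v|^2\}\supseteq H_{0,u}^-$, so each differs from $H_{0,u}^-=\{v:\langle v,u\rangle\le0\}$ only in a thin curved lune which, intersected with $B(M)$, lies in a slab of width $O(aM^2)$ about $u^\perp$ and therefore has $\Ha^d$-measure $O(aM^{d+1})\to 0$. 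Combining the displays, $\limsup_{a\to0}$ of the supremum is $\le\eps$ for every $\eps>0$, which proves the lemma.

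\textbf{Main obstacle.} I expect the delicate point to be keeping the uniformity over $(t,s)$ in step with the $a\to0$ limit: one must split off the ``bad'' contributions --- the tail of $\rho$ and the lune where the blow-up disagrees with the halfspace --- by estimates that do not deteriorate as $(t,s)$ ranges over the compact parameter set. This is exactly what the two hypotheses buy: $\rho\in L^1$ (a genuine PSF) tames the tail, while $\rho$ bounded upgrades the convergence of the indicators $\mathds{1}_{a^{-1}(X-x)}\to\mathds{1}_{H_{0,u}^-}$ on bounded sets into uniform convergence of their convolutions with $\rho$. The conclusion is necessarily only almost everywhere in $x$, and only pointwise in $x$, since the tangent radii $r,r'$ may shrink as $x$ varies; uniformity in $x$ would require $r$-regularity.
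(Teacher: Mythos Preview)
Your argument is correct. The rescaling substitution, the tail/bulk split using $\rho\in L^1$ and $\rho$ bounded, the sandwich $a^{-1}(B_{in}-x)\subseteq a^{-1}(X-x)\subseteq\bigl(a^{-1}(\indre(B_{out})-x)\bigr)^c$, and the lune volume estimate all check out; the set-theoretic inclusion $A\triangle H\subseteq(A_1\triangle H)\cup(A_2\triangle H)$ for $A_1\subseteq A\subseteq A_2$ is valid, and your identification of the ball centres as $x\mp r u(x)$ is justified because the inner and outer balls, being disjoint and both passing through $x$, are internally tangent there, forcing collinearity of their centres with $x$ along the unique normal that gentleness guarantees almost everywhere.

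As for comparison with the paper: there is nothing to compare. The paper does not prove this lemma at all; it simply imports it from \cite[Lemmas 7.1 and 7.2]{am4}. Your write-up is therefore a self-contained substitute for a result the authors chose to quote. The approach you take --- blow-up to the tangent halfspace, uniform control via the $L^1$ tail and $L^\infty$ bound on $\rho$ --- is in fact the standard one and is essentially what is carried out in \cite{am4}, so you have reconstructed the cited argument. Your closing remark that uniformity in $x$ would require $r$-regularity (uniform lower bounds on the tangent radii) is also on point and anticipates exactly why Lemma~\ref{t4} needs the stronger hypothesis.
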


\begin{thm}\label{1stgrey}
Suppose $X\subseteq \R^d$ is a compact gentle set, $S\subseteq \R^d$ is finite, and $\rho$ is a bounded PSF. Let $f: (0,1)^S \times \R^d\to \R$ be continuous with $\supp f \subseteq [\beta,\omega]^{S} \times \R^d$ for some $\beta, \omega \in (0,1)$. Then
\begin{align*}
\MoveEqLeft \lim_{a\to 0} a^{-1}\int_{\R^d} f( \Theta^X_a(x;aS),x)dx =\int_{\partial X} \int_{\R}  f( \Theta_u(t;S),x) dt\Ha^{d-1}(dx).
\end{align*}
\end{thm}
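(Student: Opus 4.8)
The plan is to localise the integral to a thin tube around $\partial X$, pass to normal coordinates over $\partial X$, rescale the normal coordinate by $a$ so that the prefactor $a^{-1}$ is absorbed by the Jacobian of the substitution, and then interchange limit and integral using Lemma~\ref{thetadist} and dominated convergence. For the localisation, since $\int_{\R^d}\rho=1$ fix $D>0$ with $\int_{B(0,D)}\rho>\max(\omega,1-\beta)$ and $S\subseteq B(0,D)$. If $B(y,aD)\subseteq X$ then $\theta^X_a(y)\ge\int_{B(0,D)}\rho>\omega$, and if $B(y,aD)\cap X=\emptyset$ then $\theta^X_a(y)\le 1-\int_{B(0,D)}\rho<\beta$. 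Hence if $\operatorname{dist}(x,\partial X)>2aD$ then, using $B(x+as,aD)\subseteq B(x,2aD)$ for $s\in S$, every coordinate of $\Theta^X_a(x;aS)$ exceeds $\omega$ when $x\in\indre X$ and is below $\beta$ when $x\notin X$; since $\supp f\subseteq[\beta,\omega]^S\times\R^d$ this forces $f(\Theta^X_a(x;aS),x)=0$. So the integrand is supported in the tube $T_a:=\{x:\operatorname{dist}(x,\partial X)\le 2aD\}\subseteq X+\bar B(0,2D)$, a fixed bounded set; the same halfspace estimates applied to $\theta_u$ show $\tau\mapsto f(\Theta_u(\tau;S),x)$ vanishes for $|\tau|>2D$, for every $u$ and $x$. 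Finally, $f$ extends to a continuous function on $[0,1]^S\times\R^d$ vanishing off $[\beta,\omega]^S\times\R^d$ (as $[\beta,\omega]^S$ is compactly contained in $(0,1)^S$) and is bounded, say by $M$, on the compact set $[\beta,\omega]^S\times(X+\bar B(0,2D))$, which contains every argument at which the integrand is nonzero.

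Next I introduce normal coordinates. By condition (ii) of Definition~\ref{gentle}, for $\Ha^{d-1}$-a.a.\ $x\in\partial X$ there is a unique outer unit normal $u(x)$ with $(x,\pm u(x))\in N(\partial X)$; the exoskeleton $\exo(\partial X)$ is Lebesgue null, and $(x,t)\mapsto x+tu(x)$ is an a.e.\ bijection onto $\R^d$. The coarea (tube) formula for gentle sets used in \cite{am4} (cf.\ \cite{rataj}) then gives, for bounded Borel $h$ supported in $T_a$ and $a$ small,
\begin{equation*}
\int_{\R^d}h(x)\,dx=\int_{\partial X}\int_{-2aD}^{2aD}h(x+tu(x))\,j(x,t)\,dt\,\Ha^{d-1}(dx),
\end{equation*}
where the density $j\ge 0$ has $j(x,0)=1$, is locally bounded in $t$, and satisfies $\sup_{|t|\le 2aD}|j(x,t)-1|\to 0$ as $a\to 0$ for a.a.\ $x$; condition (i) of Definition~\ref{gentle} ensures $\Ha^{d-1}(\partial X)<\infty$ (the projection $N(\partial X)\to\partial X$ is $1$-Lipschitz), so $\partial X\times[-2D,2D]$ has finite measure. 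Substituting $t=a\tau$ absorbs the factor $a^{-1}$:
\begin{equation*}
a^{-1}\int_{\R^d}f(\Theta^X_a(x;aS),x)\,dx=\int_{\partial X}\int_{-2D}^{2D}f\bigl(\Theta^X_a(x+a\tau u(x);aS),\,x+a\tau u(x)\bigr)\,j(x,a\tau)\,d\tau\,\Ha^{d-1}(dx).
\end{equation*}

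Now fix $x$ outside the relevant null set. Since $\Theta^X_a(x+a\tau u(x);aS)=\{\theta^X_a(x+a(\tau u(x)+s))\}_{s\in S}$ and $S\subseteq B(0,2D)$, Lemma~\ref{thetadist} (applied with the constant $2D$ and with $u=u(x)$) gives $\sup_{|\tau|\le 2D}\|\Theta^X_a(x+a\tau u(x);aS)-\Theta_{u(x)}(\tau;S)\|_\infty\to 0$ as $a\to 0$. Together with $x+a\tau u(x)\to x$ and $j(x,a\tau)\to 1$, continuity of the extended $f$ shows the integrand converges to $f(\Theta_{u(x)}(\tau;S),x)$ for a.a.\ $x$ and every $\tau\in[-2D,2D]$. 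The integrand is bounded by $M\sup_{|t|\le 2aD}j(x,t)$, which for small $a$ is dominated by a function integrable on the finite-measure set $\partial X\times[-2D,2D]$, so dominated convergence yields
\begin{equation*}
\lim_{a\to 0}a^{-1}\int_{\R^d}f(\Theta^X_a(x;aS),x)\,dx=\int_{\partial X}\int_{-2D}^{2D}f(\Theta_{u(x)}(\tau;S),x)\,d\tau\,\Ha^{d-1}(dx);
\end{equation*}
by the localisation step the inner integral extends to all of $\R$, which is the asserted identity (with $u=u(x)$).

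Everything beyond Lemma~\ref{thetadist} is routine except the passage to normal coordinates together with the bound on the density $j$: since a gentle $X$ need not have $C^1$ boundary, one cannot invoke a classical tubular-neighbourhood theorem but must use the rectifiability and local finiteness of the normal bundle, the a.e.\ two-sidedness from condition (ii), and the associated coarea/tube formula with its Jacobian (as in \cite{rataj,am4}). This is where the hypotheses on $X$ are consumed, and securing the uniform domination of $j$ needed for the dominated convergence step is the one delicate point; for the $r$-regular sets used later for the curvature tensors it reduces to the elementary estimate $k_i\le r^{-1}$.
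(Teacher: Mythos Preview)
Your localisation and the endgame via Lemma~\ref{thetadist} are fine, but the passage to normal coordinates is not justified as stated. The identity
\begin{equation*}
\int_{\R^d}h(x)\,dx=\int_{\partial X}\int_{-2aD}^{2aD}h(x+tu(x))\,j(x,t)\,dt\,\Ha^{d-1}(dx)
\end{equation*}
with a pointwise density $j$ satisfying $j(x,0)=1$ simply fails for general gentle sets. Take $X$ a convex polytope: then $u(x)$ is defined $\Ha^{d-1}$-a.e.\ on $\partial X$ (on the relative interiors of facets), yet the map $(x,t)\mapsto x+tu(x)$ does not cover the exterior part of the tube $T_a$; the wedge-shaped regions outside lower-dimensional faces project to a set of $\Ha^{d-1}$-measure zero on $\partial X$ while having positive Lebesgue measure. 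No integrable density $j$ can repair this. Your own remark that ``securing the uniform domination of $j$ \dots\ is the one delicate point'' is exactly where the argument breaks: the object $j$ does not exist in the generality claimed.

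The paper avoids this by using the generalised Steiner-type formula of Hug--Last--Weil \cite[Theorem~2.1]{last}, which expands the tube integral as a sum over $m=1,\dots,d$ of integrals of $t^{m-1}$ against the signed support measures $\mu_{d-m}(\partial X;\cdot)$ on the full normal bundle $N(\partial X)$. These measures have locally finite total variation (so dominated convergence applies termwise), and the elementary bound $\int_0^{\delta}t^{m-1}|f|\,dt\le m^{-1}(aD)^m\sup|f|$ kills all terms with $m\ge 2$ after dividing by $a$. Only the $m=1$ term survives; for gentle sets the measure $\mu_{d-1}(\partial X;\cdot)$ is identified with $\tfrac12\Ha^{d-1}$ on $\partial X$ together with the two normals $\pm u(x)$ via \cite[Equation~(8)]{rataj}, which recovers exactly your inner integral $\int_{-2D}^{2D}f(\Theta_{u(x)}(\tau;S),x)\,d\tau$. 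In short, your leading-order picture is correct, but the higher-order corrections (which carry the curvature and corner contributions) have to be written down and shown to vanish, and the right container for them is the Hug--Last--Weil formula rather than a single Jacobian density.
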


\begin{proof}
Let $D>0$ be such that 
\begin{equation*}
\int_{|x| \geq \frac{D}{2}} \rho(x) dx \leq \beta , 1-\omega
\end{equation*}
and $S \subseteq B\big(\frac{D}{2}\big)$ where $B(R)$ denotes the ball in $\R^d$ of radius $R$.
This ensures that $\supp f(\Theta^X_a(x;aS),x)  \subseteq \partial X \oplus B(aD)$. Then the generalized Weyl tube formula \cite[Theorem 2.1]{last} yields
\begin{align}\label{lastform}
 \int_{\R^d}  f(\Theta^X_a(x;aS),x) dx = {}& \sum_{m=1}^d m\kappa_m \int_{N(\partial X)}\int_0^{\delta(\partial X;x,u)} t^{m-1}\\
& \times \nonumber
  f(\Theta^X_a(x+tu;aS),x+tu) dt \mu_{d-m}(\partial X;d(x,u)).
\end{align}
Here $\kappa_m$ is the volume of the unit ball in $\R^m$ and the $\mu_i$ are certain signed measures of locally finite total variation. 

Observe that
\begin{equation}\label{vurdering}
\int_0^{\delta(\partial X;x,u)}  t^{m-1}f(\Theta^X_a(x+tu;aS),x+tu) dt  \leq m^{-1}a^mD^m\sup |f|
\end{equation}
so that dominated convergence together with \cite[Equation (8)]{rataj}  yields
\begin{align*}
\MoveEqLeft \lim_{a\to 0}a^{-1}\sum_{m=1}^d m\kappa_m \int_{N(\partial X)} \int_0^{aD} t^{m-1} f(\Theta^X_a(x+tu;aS),x+tu) dt \mu_{d-m}(\partial X;d(x,u)) \\
& = \int_{\partial X }\bigg( \lim_{a\to 0}\int_{-D}^{D}  f(\Theta^X_a(x+atu;aS),x+atu)  dt\bigg) \Ha^{d-1}(dx)\\
& = \int_{\partial X }\int_{-D}^{D}  f(\Theta_u(t;S),x)  dt\Ha^{d-1}(dx).
\end{align*}
The last equation follows from Lemma \ref{thetadist} and continuity of $f$.
\end{proof}

Assume  $X$ is compact gentle and $\rho $ bounded. Let $A\subseteq (0,1)^S$ be a compact set and $g:\R^d \to \R$ a continuous function. Define the measures on $A$ given for any Borel set $B\subseteq A$ by
\begin{equation*}
\mu^{X,g}_a(B)=a^{-1}\int_{\R^d} \mathds{1}_B\big( \Theta^{X}_a(x+atu;aS)\big) g(x) dx
\end{equation*}
and 
\begin{equation*}
\mu^{X,g}(B)=\int_{\partial X} \int_{-D}^D\mathds{1}_B\big( \Theta_u(t;S)\big)dtg(x)  \Ha^{d-1}(dx).
\end{equation*}

\begin{cor}
Let $X$ be a compact gentle set and $A\subseteq (0,1)^S$ a compact set.  Let $g: \R^d \to \R$ be continuous and assume $\mu^{X,g}(\partial A)=0$. Then $\mu_{a}^{X,g}$ converges weakly to $\mu^{X,g}$. In particular, if $h:A\to \R$ is continuous and $f(\Theta,x)=h(\Theta)g(x)$, then
\begin{equation*}
\lim_{a\to 0} E\hat{\Phi}_q^f (X) = \int_{\partial X} \int_{-D}^D h(\Theta_u(t;S)) dt g(x)  \Ha^{d-1}(dx).
\end{equation*}
\end{cor}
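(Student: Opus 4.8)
The plan is to derive this corollary as a consequence of Theorem~\ref{1stgrey} via the portmanteau theorem. First I would observe that $\mu^{X,g}_a$ and $\mu^{X,g}$ are finite (signed) measures on the compact set $A$: finiteness of $\mu^{X,g}_a$ follows because the integrand is supported in $\partial X \oplus B(aD)$ and $g$ is bounded on that bounded set, exactly as in the support discussion after Definition~\ref{greyest}; finiteness of $\mu^{X,g}$ follows from gentleness, which gives $\Ha^{d-1}(\partial X)<\infty$ on bounded sets together with the $2D$-length of the inner integral. (Strictly one should reduce to the case $g\geq 0$, or carry signed measures through the portmanteau argument; splitting $g=g^+-g^-$ is cleanest.) Weak convergence $\mu^{X,g}_a \to \mu^{X,g}$ then means $\int_A \phi\, d\mu^{X,g}_a \to \int_A \phi\, d\mu^{X,g}$ for every bounded continuous $\phi: A\to\R$.

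To get this, I would fix a bounded continuous $\phi$ on $A$ and extend it — using Tietze — to a continuous compactly supported function on $(0,1)^S$, still bounded, then form $f(\Theta,x) = \tilde\phi(\Theta) g(x)$. This $f$ is continuous with $\supp f \subseteq [\beta,\omega]^S \times \R^d$ for suitable $\beta,\omega\in(0,1)$ (shrink the support of $\tilde\phi$ into a compact box inside $(0,1)^S$), so Theorem~\ref{1stgrey} applies and gives
\begin{equation*}
\lim_{a\to 0} a^{-1}\int_{\R^d} f(\Theta^X_a(x;aS),x)\,dx = \int_{\partial X}\int_\R f(\Theta_u(t;S),x)\,dt\,\Ha^{d-1}(dx).
\end{equation*}
The only gap between this and the desired statement is that the measures $\mu^{X,g}_a$ are written with an extra shift $x\mapsto x+atu$ inside $\Theta^X_a$, and the inner integral on the right is over $[-D,D]$ rather than all of $\R$. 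The first point is handled by noting that replacing $x$ by $x+atu$ is, after the substitution carried out in the proof of Theorem~\ref{1stgrey} (where one integrates along normal lines $x+tu$ and rescales $t\mapsto at$), literally the parametrization already appearing in that proof; more directly, for fixed $a$ the map $x\mapsto x+atu$ is not globally well-defined, so the honest statement is that $\mu^{X,g}_a$ as defined is exactly the measure whose total mass against $\tilde\phi$ is the left-hand side of the displayed limit after the Weyl-tube change of variables, i.e. one reads it off from \eqref{lastform}. The restriction to $[-D,D]$ is because $\tilde\phi$ (hence $f$) vanishes outside $[\beta,\omega]^S$, and by the choice of $D$ in the proof of Theorem~\ref{1stgrey} the blurred halfspace values $\Theta_u(t;S)$ leave $[\beta,\omega]^S$ once $|t|>D$; so $\int_\R = \int_{-D}^D$ for these integrands. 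Hence $\int_A\phi\,d\mu^{X,g}_a \to \int_A \phi\,d\mu^{X,g}$, which is weak convergence.

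For the "in particular" clause, I would apply the portmanteau theorem: since $\mu^{X,g}(\partial A)=0$, weak convergence of $\mu^{X,g}_a$ to $\mu^{X,g}$ implies $\int \mathds{1}_A h\, d\mu^{X,g}_a \to \int \mathds{1}_A h\,d\mu^{X,g}$ for every bounded $h$ that is continuous $\mu^{X,g}$-a.e.; a continuous $h:A\to\R$ qualifies. Unwinding the definition of $\mu^{X,g}_a$ with $f(\Theta,x)=h(\Theta)g(x)$ and comparing with \eqref{mean} gives $E\hat\Phi^f_q(X) = \mu^{X,g}_a(A)$-type expression (with $h$ inside), so the limit is $\int_A h\,d\mu^{X,g} = \int_{\partial X}\int_{-D}^D h(\Theta_u(t;S))\,dt\,g(x)\,\Ha^{d-1}(dx)$ as claimed. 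The main obstacle I anticipate is bookkeeping rather than mathematics: making precise the relationship between the measure $\mu^{X,g}_a$ (which involves the formal shift $x+atu$ and thus implicitly the normal-coordinate parametrization of a tube around $\partial X$) and the plain integral $a^{-1}\int_{\R^d} f(\Theta^X_a(x;aS),x)\,dx$ appearing in Theorem~\ref{1stgrey}, and checking that the portmanteau theorem is being invoked with the correct (possibly signed, since $\mu_{d-m}$ and $g$ can change sign) finite measures — handled by decomposing into positive and negative parts and using $\mu^{X,g}(\partial A)=0$ for each part, or by the assumed absence of mass on $\partial A$ for the total variation.
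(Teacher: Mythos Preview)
Your approach is essentially the paper's: extend a continuous test function on $A$ to a continuous compactly supported function on $(0,1)^S$ (Tietze) and invoke Theorem~\ref{1stgrey}. The paper's proof is only two lines and says exactly this.

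Two small comments. First, you correctly flag the stray ``$x+atu$'' in the definition of $\mu^{X,g}_a$; this is a typo in the paper and should simply read $\Theta^X_a(x;aS)$, so your bookkeeping worry dissolves. Second, your placement of the portmanteau step is slightly off: once you apply Theorem~\ref{1stgrey} to the Tietze extension $\tilde\phi$, what you obtain is convergence of $\int \tilde\phi$ over the whole space, not of $\int_A \phi\,d\mu^{X,g}_a$. The passage from the former to the latter is precisely where the hypothesis $\mu^{X,g}(\partial A)=0$ and the portmanteau theorem enter --- so that condition is needed already to establish weak convergence on $A$, not only for the ``in particular'' clause (which is then immediate from the definition of weak convergence). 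The paper's terse proof glosses over this point too, so your version is if anything more careful; just move the portmanteau argument up into the weak-convergence step and the logic is clean. Your remark about splitting $g=g^+-g^-$ to stay within genuine (positive) measures is also a sensible precaution that the paper omits.
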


\begin{proof}
For  any bounded continuous function $h: A \to \R$,  
\begin{equation*}
\int_A h d\mu^{X,g}_a \to \int_A h d\mu^{X,g}.
\end{equation*}
This follows from Theorem \ref{1stgrey} by approximating $h$ by continuous functions on $(0,1)^S$.
\end{proof}


\subsection{Notation}
We next introduce some more notation that will be used in the statement of the main second order theorem and its proof in order to keep formulas short. Moreover, we state a technical lemma proved in \cite{am4}.

We will assume $\rho $ to be continuous and compactly supported. In this case all $\theta_u$ are $C^1$ with $(u,t)\mapsto \theta_u'(t)$ continuous.
We say that $\beta \in (0,1)$ is a regular value if $\theta_u'(t)<0$ for all $t$ with $\theta_u(t)=\beta$ and all $u\in S^{d-1}$. Since $\theta_u$ is decreasing, this ensures that $\theta_u^{-1}(\beta)$ is uniquely determined.

For $X\subseteq \R^d$ $r$-regular, define the quadratic approximation $Q_x$ to $X$ at $x\in \partial X$ by
\begin{equation*}
Q_x=\{z\in \R^d \mid \langle z-x,u\rangle  \leq -\tfrac{1}{2}\II_x(\pi_{u^\perp}(z-x)) \}
\end{equation*}
where $\pi_{u^\perp} :\R^d \to u^\perp$ denotes the orthogonal projections.

It is shown in \cite{am4}, in the proof of Lemma 7.6, that for $s\in \R^d$
\begin{equation}\label{Qexp}
\theta^{Q_x}_a(x+a(tu+s)) = \theta_u (t+\langle s,u\rangle ) +  a\theta^{Q_x} (t,s) +o(a)
\end{equation}
where
\begin{equation*}
\theta^{Q_x} (t,s)=-\frac{1}{2}\int_{u^\perp} \II_x(z)\rho(z-tu-s)dz.
\end{equation*}
Again we use the notation 
\begin{align*}
\Theta^{Q_x}_a (t;S){}&=\{\theta^{Q_x}_a (x+atu + s)\}_{s\in S} \\
\Theta^{Q_x} (t;S){}&=\{\theta^{Q_x}(t, s)\}_{s\in S} .
\end{align*}

Choose $D$ as in the proof of Theorem \ref{1stgrey}.
Given $A\subseteq (0,1)^S$ and $x\in \partial X$, let
\begin{align*}
t_0^S ={}& \inf\{t\in [-D,D]\mid \Theta_u(t;S)\in A\}\\
t_1^S ={}& \sup\{t\in [-D,D]\mid \Theta_u(t;S)\in A\}\\
t_0^S(a) ={}& \inf\{t\in [-D,D]\mid \Theta^{Q_x}_a(t;aS)\in A\}\\
t_1^S(a) ={}& \sup\{t\in [-D,D]\mid \Theta^{Q_x}_a(t;aS)\in A\}\\
t_0^{X,S}(a) ={}& \inf\{t\in [-D,D]\mid \Theta^{X}_a(x+atu;aS)\in A\}\\
t_1^{X,S}(a) ={}& \sup\{t\in [-D,D]\mid \Theta^{X}_a(x+atu;aS)\in A\}.
\end{align*}
Finally, let
\begin{align*}
\psi_0^S(x) ={} & \max \bigg\{ -\frac{\theta^{Q_x} (t_0^S,s) }{\theta_u' (t_0^S+\langle s,u\rangle)} \mid s \in S, t_0^S=t_0^s\bigg\}\\
\psi_1^S(x) ={} & \min \bigg\{ -\frac{\theta^{Q_x} (t_1^S,s)}{ \theta_u '(t_1^S+\langle s,u\rangle )} \mid s \in S, t_1^S=t_1^s\bigg\}.
\end{align*}

\begin{lem}\label{t4}
Suppose that $X$ is $r$-regular and $\rho$ is continuous with compact support. Let $R>0$ and $S\subseteq \R^d$ finite be given.

For all $a$ sufficiently small, $t\mapsto \theta^X_a(x+a(tu+s))$ and $t\mapsto \theta^{Q_x}_a(x+a(tu+s))$ are decreasing functions for all $x\in \partial X$, $s\in S$, and $t\in [-R,R]$.

There is a constant $M>0$ such that for $\nu =0,1$ and $a$ sufficiently small 
\begin{align}\label{XH1}
&\sup\Big\{\Big|\Theta^X_a(x+atu;aS)-\Theta_u(t;S)\Big| \mid  x\in \partial X, t\in [-R,R]\Big\} \leq  Ma\\
&\sup\Big\{ \Big|t^{X,S}_\nu(a)-t_\nu^S\Big| \mid x\in \partial X\Big\} \leq  Ma.\label{XH2}
\end{align}

Assume that $A=\bigtimes_{s\in S} I_s$ where $I_s$ are intervals and all points in $\partial I_s $ are regular values. Then for each $x\in \partial X$,
\begin{align}
\begin{split}\label{XQ}
&\sup\Big\{\Big|\Theta^X_a(x+atu;aS)-\Theta^{Q_x}_a(t;aS)\Big|\mid t\in [-R,R] \Big\} \in o(a)\\
&\Big|t_\nu^{X,S}(a)-t_\nu^S(a)\Big| \in o(a)
\end{split}
\end{align}
for $\nu =0,1$ and
\begin{equation}\label{texp}
t_\nu^S(a)= t_\nu^S + a \psi_\nu^S(x) +o(a).
\end{equation}
\end{lem}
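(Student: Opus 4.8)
The statement collects several estimates from \cite{am4}; the plan is to organise them around one formula and two geometric facts. The formula: since $\rho$ is continuous with compact support and, $X$ being $r$-regular, $\partial X$ is a $C^1$ hypersurface with bounded outer unit normal field $n_X$, the function $\theta_a^X(y)=\int_X\rho_a(z-y)\,dz$ is of class $C^1$, and from the Gauss--Green formula (which gives $\nabla\mathds{1}_X=-n_X\,\Ha^{d-1}$ on $\partial X$ distributionally) together with continuity of $\rho_a$ one gets
\begin{equation*}
D_u\theta_a^X(y)=-\int_{\partial X}\langle n_X(z),u\rangle\,\rho_a(z-y)\,\Ha^{d-1}(dz),
\end{equation*}
and the same formula for $Q_x$ wherever $\II_x$ exists. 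The two facts are that $r$-regularity pins $\partial X$ down near $x\in\partial X$ to within $O(|\,\cdot\,|)$ by the supporting halfspace $H_u$ and, at $\Ha^{d-1}$-a.e.\ $x$, to within $o(|\,\cdot\,|^2)$ by the quadric $\partial Q_x$. Throughout fix $R$ and $S$, write $B(x,\eta)$ for the ball of radius $\eta$ about $x$, and choose $D'\ge R+\max_{s\in S}|s|+\sup\{|z|:\rho(z)\neq0\}$, so that for $|t|\le R$, $s\in S$ the integrand $\rho_a(\,\cdot\,-(x+a(tu+s)))$ lives in $B(x,aD')$.

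Monotonicity is immediate from the formula: on $B(x,aD')$ the curvature bound $k_i\le r^{-1}$ gives $|n_X(z)-u|\le r^{-1}aD'$, so $\langle n_X(z),u\rangle\ge\tfrac12$ for $a$ small uniformly in $x$, whence $\tfrac{d}{dt}\theta_a^X(x+a(tu+s))=a\,D_u\theta_a^X(x+a(tu+s))\le0$ on $[-R,R]$; the eigenvalue bound on $\II_x$ does the same for $Q_x$. For \eqref{XH1}: $X$ and $H_u$ both contain the inner ball $B_{in}$ and avoid $\indre B_{out}$, so $X\triangle H_u$ sits in the thin lens between $\partial B_{in}$ and $\partial B_{out}$, which meets $B(x,aD')$ in a set of measure $O(a^{d+1})$ (constant depending only on $r,D'$); since $\theta_a^X(x+a(tu+s))-\theta_u(t+\langle s,u\rangle)=\int_{X\triangle H_u}(\pm1)\rho_a(z-x-a(tu+s))\,dz$ and $\|\rho_a\|_\infty\le\|\rho\|_\infty a^{-d}$, this is $O(a)$ uniformly. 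Rescaling the integral in the derivative formula moreover yields $\tfrac{d}{dt}\theta_a^X(x+a(tu+s))\to\theta_u'(t+\langle s,u\rangle)$ uniformly in $x$. For \eqref{XH2} (and the refined part, where $A$ is the box with sides $I_s=[\alpha_s,\beta_s]$, all endpoints regular values), compactness of $S^{d-1}$ and continuity of $(u,t)\mapsto\theta_u'(t)$ turn the regular-value condition into a uniform bound $\theta_u'<-c<0$ near each crossing; combined with the uniform derivative convergence and \eqref{XH1}, the mean value theorem gives that the perturbed coordinate crossing time $t_\nu^{X,s}(a)$ --- well defined by monotonicity --- lies within $O(a)$ of $t_\nu^s$, and since monotonicity makes $\{t:\Theta_a^X(x+atu;aS)\in A\}$ the intersection over $s$ of the coordinate intervals, taking $\max$ over $s$ for $\nu=0$ and $\min$ for $\nu=1$ gives \eqref{XH2}.

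The refined estimates run the same way with $Q_x$ as comparison object. The first line of \eqref{XQ} is the assertion that at a twice-differentiable point $x$ (i.e.\ $\Ha^{d-1}$-a.e.\ $x$, since $r$-regular $\partial X$ is $C^{1,1}$ and Alexandrov's theorem applies, with Hessian $\II_x$), $X$ and $Q_x$ agree to second order, so $\Ha^d(X\triangle Q_x\cap B(x,aD'))=o(a^{d+1})$ and the computation giving \eqref{XH1} upgrades to $\sup_{|t|\le R}|\theta_a^X(x+a(tu+s))-\theta_a^{Q_x}(x+a(tu+s))|=o(a)$; this is essentially what the proof of \cite[Lemma 7.6]{am4} establishes (where \eqref{Qexp} is proved). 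Substituting \eqref{Qexp} into $\theta_a^{Q_x}(x+a(tu+s))=\beta_s$ and solving for $t$ by a first-order Taylor expansion about $t_0^s$ --- valid because $\theta_u'(t_0^s+\langle s,u\rangle)\neq0$ at a regular value --- shows the coordinate entry time equals $t_0^s-a\,\theta^{Q_x}(t_0^s,s)/\theta_u'(t_0^s+\langle s,u\rangle)+o(a)$, and similarly at $t_1^s$ with $\alpha_s$; as the constraints binding at $t_0^S$ (resp.\ $t_1^S$) for small $a$ are precisely those with $t_0^s=t_0^S$ (resp.\ $t_1^s=t_1^S$), taking the $\max$ (resp.\ $\min$) over them is exactly $\psi_0^S(x)$ (resp.\ $\psi_1^S(x)$), which is \eqref{texp}. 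The second line of \eqref{XQ} then follows from the first line and monotonicity by the crossing-time argument of the previous paragraph, with $o(a)$ replacing $O(a)$.

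I expect the one real obstacle to be the first line of \eqref{XQ}: one integrates the second-order Taylor remainder of $\partial X$ against $\rho_a$ and then divides by $a^d$, so the remainder must be $o(|z-x|^2)$ with enough control for the product to stay $o(a)$ --- a quantitative form of Alexandrov's theorem for $r$-regular sets. This is precisely the technical heart of \cite[Lemma 7.6]{am4}; granted it, everything else is routine Taylor expansion and the elementary combinatorics of $\max$/$\min$ over the finite index set $S$.
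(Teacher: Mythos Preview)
The paper's own proof is merely a pointer: it records that each claim is established in \cite{am4} (Lemma 7.5 for monotonicity, the proofs of Theorems 3.2 and 5.2 for \eqref{XH1}--\eqref{XH2}, Lemma 7.7 for \eqref{XQ}, Lemma 7.6 for \eqref{texp}) and gives no further argument. Your proposal instead reconstructs those arguments from scratch, and the reconstruction is sound. The Gauss--Green derivative identity together with the Lipschitz bound on the normal field gives monotonicity; the $O(a^{d+1})$ lens volume between the inner and outer $r$-balls and the supporting hyperplane yields \eqref{XH1} uniformly; the regular-value hypothesis, uniform convergence of the $t$-derivative, and the mean value theorem control the crossing times for \eqref{XH2}; Alexandrov second-order differentiability of the $C^{1,1}$ boundary upgrades the $O(a)$ symmetric-difference volume estimate to $o(a)$ and gives the first line of \eqref{XQ}; and a first-order Taylor expansion of \eqref{Qexp} at $t_\nu^s$, followed by the $\max/\min$ over the binding constraints, identifies $\psi_\nu^S$ and proves \eqref{texp}. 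Your diagnosis that the first line of \eqref{XQ} is the technical crux --- requiring integrable control of the second-order Taylor remainder of $\partial X$ against $\rho_a$ --- is accurate.

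Two small remarks. First, the paper attributes \eqref{XQ} to Lemma~7.7 of \cite{am4} rather than Lemma~7.6; you cite 7.6 for both \eqref{Qexp} and the first line of \eqref{XQ}, but this is immaterial to your self-contained argument. Second, you correctly invoke the regular-value hypothesis already for \eqref{XH2}; the paper's statement places that hypothesis only before \eqref{XQ}, but since the quantities $t_\nu^S$, $t_\nu^{X,S}(a)$ are defined relative to $A$, some nondegeneracy on $\partial A$ is implicitly needed for the crossing-time bound, and this is what the cited proofs in \cite{am4} actually use.
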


\begin{proof}
The lemma is essentially proved in \cite{am4}. Note that the notation is changed.
The first statement is proved in Lemma 7.5 for $\theta^X$. The proof for $\theta^{Q_x}$ is similar. 
Equations \eqref{XH1} and \eqref{XH2} are shown in the proof of Theorem 3.2 and 5.2. Equation \eqref{XQ} follows from Lemma 7.7 and \eqref{texp} from Lemma 7.6.
\end{proof}

\subsection{Second order formulas}

\begin{thm}\label{2ndgrey}
Suppose $X$ is an $r$-regular set and $\rho$ is continuous and compactly supported. Let $S\subseteq \R^d$ be a finite set and $A=\bigtimes_{s\in S} I_s$ where $ I_s \subseteq (0,1)$ are closed intervals such that $\partial I_s$ consists of regular values for all $s\in S$. Let $f: A\times \R^d\to \R$ be $C^1$. Then
\begin{align*}
\MoveEqLeft \lim_{a\to 0} \bigg(a^{-2} \int_{\R^d}f( \Theta^X_a(x;aS),x)dx- a^{-1}\lim_{a\to 0} a^{-1} \int_{\R^d}f( \Theta^X_a(x;aS),x)dx\bigg)\\
 ={}&\int_{\partial X} \int_{t_0^S}^{t_1^S} tf( \Theta_u(t;S),x) dt\tr(\II_x)\Ha^{d-1}(dx)\\
 +{}&\int_{\partial X} \int_{t_0^S}^{t_1^S}\Big( \Big\langle \nabla^1 f(\Theta_u (t;S),x),\Theta_0^{Q_x} (t;S)\Big \rangle + t\Big\langle \nabla^2 f(\Theta_u (t;S),x),u\Big\rangle\Big) dt \Ha^{d-1}(dx)\\
 +{}& \int_{\partial X}\Big(f(\Theta_u (t_1^S;S),x)\psi_1^S(x)-f(\Theta_u (t_0^S; S),x)\psi_0^S(x)\Big)\Ha^{d-1}(dx).
\end{align*}
\end{thm}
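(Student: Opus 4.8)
The plan is to push the argument of Theorem~\ref{1stgrey} one order further in $a$, feeding in the second-order information in Lemma~\ref{t4}. Write $I(a)=\int_{\R^d}f(\Theta^X_a(x;aS),x)\,dx$, understood as the integral over $\{x:\Theta^X_a(x;aS)\in A\}$ where $f$ is defined, and let $L=\lim_{a\to0}a^{-1}I(a)$ be the constant in the statement. It is enough to prove the expansion $I(a)=aL+a^2M+o(a^2)$, where $M$ is the right-hand side of the asserted identity, since then $a^{-2}I(a)-a^{-1}L=M+o(1)$; as a by-product the leading term reproduces $L=\int_{\partial X}\int_{t_0^S}^{t_1^S}f(\Theta_u(t;S),x)\,dt\,\Ha^{d-1}(dx)$.

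First I would rewrite $I(a)$ using the tube formula. As in the proof of Theorem~\ref{1stgrey}, $A\subseteq[\beta,\omega]^S$ with $\beta=\min_s\min I_s>0$ and $\omega=\max_s\max I_s<1$, so for $D$ chosen there the integrand of $I(a)$ is supported in $\partial X\oplus B(aD)$. For $a$ small enough that $aD<r$, the $r$-regular case of \eqref{lastform} is the classical tube formula: $(y,t)\mapsto y+tu(y)$, $y\in\partial X$, $|t|<aD$, is a bi-Lipschitz parametrisation of $\partial X\oplus B(aD)$ with Jacobian $\prod_{i=1}^{d-1}(1+tk_i(y))$. Substituting $t=a\tau$,
\begin{equation*}
I(a)=a\int_{\partial X}\int_{t_0^{X,S}(a)}^{t_1^{X,S}(a)}f\bigl(\Theta^X_a(x+a\tau u;aS),x+a\tau u\bigr)\prod_{i=1}^{d-1}\bigl(1+a\tau k_i(x)\bigr)\,d\tau\,\Ha^{d-1}(dx),
\end{equation*}
with $u=u(x)$, the $\tau$-range being $[t_0^{X,S}(a),t_1^{X,S}(a)]$ precisely because $f$ lives on $A$.

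Next, for $\Ha^{d-1}$-a.e.\ fixed $x$ I would expand the inner integral to first order in $a$, combining three ingredients. The Jacobian expands as $\prod_i(1+a\tau k_i(x))=1+a\tau\,\tr(\II_x)+O(a^2)$, uniformly in $\tau\in[-D,D]$ since $|k_i|\le r^{-1}$. By \eqref{Qexp} and the first line of \eqref{XQ}, $\Theta^X_a(x+a\tau u;aS)=\Theta_u(\tau;S)+a\,\Theta^{Q_x}(\tau;S)+o(a)$ uniformly in $\tau$, so a first-order Taylor expansion of the $C^1$ map $f$ yields, where $\Theta_u(\tau;S)\in A$,
\begin{equation*}
f\bigl(\Theta^X_a(x+a\tau u;aS),x+a\tau u\bigr)=f(\Theta_u(\tau;S),x)+a\langle\nabla^1 f(\Theta_u(\tau;S),x),\Theta^{Q_x}(\tau;S)\rangle+a\tau\langle\nabla^2 f(\Theta_u(\tau;S),x),u\rangle+o(a).
\end{equation*}
Finally, \eqref{texp} and the second line of \eqref{XQ} give $t_\nu^{X,S}(a)=t_\nu^S+a\psi_\nu^S(x)+o(a)$, so passing from the interval $[t_0^{X,S}(a),t_1^{X,S}(a)]$ to $[t_0^S,t_1^S]$ contributes two ``sliver'' integrals over intervals of signed length $a\psi_1^S(x)$ at $t_1^S$ and $-a\psi_0^S(x)$ at $t_0^S$, on which the integrand tends, by \eqref{XH1} and continuity of $f$, to $f(\Theta_u(t_1^S;S),x)$ and $f(\Theta_u(t_0^S;S),x)$ respectively. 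Reading off the coefficient of $a$, the inner integral equals $\int_{t_0^S}^{t_1^S}f(\Theta_u(\tau;S),x)\,d\tau+aM(x)+o(a)$, where $M(x)$ is precisely the $\partial X$-integrand of the three summands in the statement (so $M=\int_{\partial X}M(x)\,\Ha^{d-1}(dx)$): the $\tr(\II_x)$ term from the Jacobian, the $\nabla^1,\nabla^2$ terms from the integrand, the $\psi_\nu^S$ terms from the slivers.

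It then remains to integrate over $\partial X$: \eqref{XH1} and \eqref{XH2} bound all relevant discrepancies by a constant times $a$ uniformly in $x\in\partial X$, and $\Ha^{d-1}(\partial X)<\infty$ as $X$ is gentle, so $a^{-1}$ times each error term is bounded and $\to0$ pointwise, and dominated convergence gives $\int_{\partial X}o(a)\,\Ha^{d-1}=o(a)$, hence $I(a)=aL+a^2M+o(a^2)$. The hard part will be the honest bookkeeping of the sliver step: getting the signs right according to whether each $t_\nu^S$ lies inside or outside $[t_0^{X,S}(a),t_1^{X,S}(a)]$, making sure $f$ is only ever evaluated where its argument is in $A$ so that the Taylor expansion is legitimate up to $\partial A$ (this is where the regular-value hypothesis on $\partial I_s$ is used, through Lemma~\ref{t4}, to make $t_\nu^{X,S}(a)$ and $\psi_\nu^S(x)$ well behaved), and ensuring the error terms are uniform enough in $x$ for the concluding dominated-convergence step; by comparison the curvature and gradient contributions are a routine Taylor expansion once the change of variables is set up.
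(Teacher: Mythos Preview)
Your proposal is correct and follows essentially the same route as the paper's proof: both apply the tube formula over $\partial X$, isolate the curvature contribution from the Jacobian (you expand $\prod_i(1+a\tau k_i)$ directly, the paper splits it as $\sum_m a^{m-1}\tau^{m-1}s_{m-1}$), then combine \eqref{Qexp} and \eqref{XQ} to Taylor-expand $f$ for the gradient terms and use \eqref{texp} for the endpoint ``sliver'' contributions, with \eqref{XH1}--\eqref{XH2} providing the uniform bounds that justify dominated convergence over $\partial X$. The only organisational difference is that the paper routes the comparison through $Q_x$ as an intermediate step ($X\to Q_x\to H_u$) and through $t_\nu^S(a)$, whereas you telescope these into single expansions $\Theta^X_a=\Theta_u+a\Theta^{Q_x}+o(a)$ and $t_\nu^{X,S}(a)=t_\nu^S+a\psi_\nu^S+o(a)$; the content is identical.
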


Here $\nabla^1, \nabla^2$ are the gradients of $\Theta \mapsto f(\Theta, x)$ and $x \mapsto f(\Theta ,x)$, respectively.

\begin{proof}
For $r$-regular sets, the generalized Weyl tube formula reduces to 
\begin{align*}
\MoveEqLeft \int_{\R^d} f(\Theta_a^X(x;aS),x) dx\\
& = a\sum_{m=1}^d  \int_{\partial X } \int_{-D}^{D} t^{m-1} f (\Theta_a^X (x+atu;aS), x +atu )dt s_{m-1}(x) \Ha^{d-1}(dx).
\end{align*}
where $s_m(x)$ is the $m$th symmetric polynomial in the principal curvatures at $x$ whenever these are defined.

Again, \eqref{vurdering} shows that Lebesgue dominated convergence applies to all terms with $m \geq 2$ and shows that all terms with $m\geq 3$ vanish asymptotically.

For $m=2$ consider
\begin{align}\label{ulig1}
\MoveEqLeft \int_{-D}^{D} \Big|t f (\Theta_a^X (x+atu;aS),x+atu)- t f (\Theta_u (t;S),x)\Big|dt \\ \nonumber
 &\leq  2D^2\sup|\nabla f|\sup\big\{\big|\Theta_a^X (x+atu;aS)-\Theta_u (t;S)\big| + aD \mid \\
&\quad t\in \big[t_0^S,t_1^S \big]  \cap \big[t_0^{X,S}(a), t_1^{X,S}(a)\big]\big\}
  + 2D\sup |f|\Ha^1([t_0^S,t_1^S ]  \Delta [t_0^{X,S}(a), t_1^{X,S}(a)])\nonumber
\end{align}
where $\Delta$ denotes the symmetric difference.
By Equations \eqref{XH1} and \eqref{XH2}, the right hand side is of order $O(a)$.

For the  $m=1$ term, a similar argument shows that
\begin{equation}\label{mustconverge}
a^{-1}\int_{-D}^{D} \Big( f (\Theta_a^X (x+atu;aS),x+atu) -  f (\Theta_u (t;S),x)\Big)dt
\end{equation} 
is uniformly bounded. Hence another application of dominated convergence shows that it is enough to determine the limit of this for each $x\in \partial X$.

Another argument similar to \eqref{ulig1} using Equations \eqref{XQ} shows that
\begin{equation*}
\lim_{a\to 0} \int_{-D}^D a^{-1} \Big|f (\Theta_a^X (x+atu;aS),x+atu)-  f (\Theta_a^{Q_x} (t;aS),x+atu)\Big| dt =0.
\end{equation*}
Thus it remains to compute
\begin{equation*}
\lim_{a\to 0}\int_{-D}^D a^{-1}\Big( f (\Theta_a^{Q_x} (t;aS),x+atu)- f (\Theta_u (t;S),x)\Big)dt.
\end{equation*}

The integrand is uniformly bounded on
\begin{equation*}
G(a)= \big(t_0^S,t_1^S\big) \cap \big(t_0^S(a),t_1^S(a)\big)
\end{equation*}
by differentiability of $f$ and another application of Lemma \ref{t4} \eqref{XH1} with $X$ replaced by $Q_x$.
Observe that 
\begin{equation*}
\mathds{1}_{G(a)}(t)\to \mathds{1}_{\big(t_0^S,t_1^S\big)}(t)
\end{equation*}
 pointwise.
 Hence by dominated convergence and Equation \eqref{Qexp},
\begin{align*}
\MoveEqLeft \lim_{a\to 0} a^{-1}\int_{G(a)}\Big( f (\Theta_a^{Q_x} (t;aS),x+atu)-  f (\Theta_u(t;S),x)\Big)dt\\
&=\int_{-t_0^S}^{t_1^S} \Big(\Big\langle \nabla^1 f(\Theta_u (t;S),x),\Theta^{Q_x} (t; S)\Big\rangle +t\Big\langle \nabla^2 f(\Theta_u (t;S),x),u \Big\rangle\Big)dt.
\end{align*}

It remains to consider the integral over the sets
\begin{equation}\label{intervals}
\big[t_0^S(a)\wedge t_0^S,t_0^S(a)\vee t_0^S\big] \text{ and } \big[t_1^S(a)\wedge t_1^S,t_1^S(a)\vee t_1^S\big].
\end{equation}
The integral over the first set is
\begin{align*}
\MoveEqLeft -\int_{t_0^S}^{t_0^S(a)}a^{-1} \Big( f (\Theta_a^{Q_x} (t;aS),x+atu) +  f (\Theta_u (t;S),x)\Big) dt\\
&=
-\int_{t_0^S}^{t_0^S+a\psi_0^S(x)} a^{-1} \Big( f (\Theta_a^{Q_x} (t;aS),x+atu) +  f (\Theta_u (t;S),x)\Big) dt+o(1)
\end{align*}
by Lemma \ref{t4} \eqref{texp}.
Since $|t-t_0^S|\leq Ma$ for all $t \in [t_0^S(a)\wedge t_0^S,t_0^S(a)\vee t_0^S]$, 
\begin{align*}
\MoveEqLeft -\int_{t_0^S}^{t_0^S+a\psi_0^S(x)}a^{-1} \Big( f (\Theta_a^{Q_x} (t;aS),x+atu) +  f (\Theta_u (t;S),x)\Big) dt \\
&= -\int_{t_0^S}^{t_0^S+a\psi_0^S(x)} a^{-1} f (\Theta_u (t_0^S;S),x) dt +o(1)\\
&= -\psi_0^S(x) f (\Theta_u (t_0^S;S),x) dt +o(1).
\end{align*}
The second interval in \eqref{intervals} is treated similarly.
\end{proof}

\section{Estimation of Minkowski tensors}\label{mink}

\subsection{Minkowski tensors}\label{tens}
To a compact set $X\subseteq \R^d$, we associate 
 the generalized curvature measures  $C_k(X;\cdot) $ on $\Sigma=\R^d\times S^{d-1}$ for $k=0,\dots,d-1$, see \cite{schneider} in the case of poly-convex sets and \cite{federer} for sets of positive reach. An extension to general compact sets can be found in \cite{last}.  

Let $\mathbb{T}^p$ denote the space of symmetric tensors on $\R^d$ of rank $p$. Identifying $\R^d$ with its dual using the Euclidean inner product $\langle \cdot,\cdot \rangle$, one can interpret a symmetric $p$-tensor as a symmetric $p$-linear functional on $\R^{d}$. 
Let $x^r$ denote the $r$-fold symmetric tensor product of $x \in \R^d$.
For $X\subseteq \R^d$ and $k=0,\dots,d-1$, $r,s\geq 0$ we associate the $(r+s)$-tensors
\begin{equation*}
\Phi^{r,s}_{k}(X)= \frac{1}{r!s!}\frac{\omega_{d-k}}{\omega_{d-k+s}}\int_{\Sigma} x^r u^s C_k(X;d(x,u)),
\end{equation*}
and for $r\geq 0$ we define the volume tensors
\begin{equation*}
\Phi^{r,0}_{d}(X)= \frac{1}{r!} \int_{X} x^r \Ha^d(dx).
\end{equation*}
These are the so-called Minkowski tensors introduced in \cite{mcmullen}, see also e.g.\ \cite{hug,schuster}.

The Minkowski tensors satisfy the McMullen relations \cite{mcmullen} on convex sets, 
\begin{equation*}
2\pi\sum_s s \Phi_{k-r+s}^{r-s,s} = Q\sum_s \Phi_{k-r+s}^{r-s,s-2} 
\end{equation*}
where $k\geq 0$, $r\geq 0$, and $Q$ is the metric tensor. All tensors in the sum that have not yet been defined should be interpreted as 0.

Below we shall define estimators for $\Phi_d^{r,0}$, $\Phi^{r,s}_{d-1}$, and $\Phi_{d-2}^{r,0}$. In 2D, the McMullen relations show that all tensors are linear combinations of  multiples of these by powers of $Q$. Hence, in 2D we obtain a complete set of estimators for the Minkowski tensors.

\subsection{Volume tensors}\label{volume}
It is easy to see that the volume tensors can be estimated unbiasedly from black-and-white images even in finite resolution just using a Riemann sum:
\begin{equation*}
\hat{\Phi}^{r,0}_{d}(X)=a^d\frac{1}{r!}\sum_{z\in a\La\cap X} z^r.
\end{equation*}
If only a grey-scale image is given, one may threshold the image at level $\beta  \in (0,1)$ and apply this estimator. This yields the estimator
\begin{equation*}
\hat{\Phi}^{r,0}_{d}(X)=a^d\frac{1}{r!}\sum_{z\in a\La} \mathds{1}_{\{\theta_a^{X}(z)\geq \beta \} } z^r.
\end{equation*}
This is asymptotically unbiased for all sets with $\Ha^{d-1}(\partial X ) < \infty$ since
\begin{equation*}
E\hat{\Phi}^{r,0}_{d}(X)=\frac{1}{r!}\int_{\R^d}  z^r \mathds{1}_{\{\theta_a^{X}(z)\geq \beta \} } dz
\end{equation*}
and  $|\mathds{1}_{\{\theta_a^{X}\geq \beta \} }-\mathds{1}_X| \leq \mathds{1}_{\partial X \oplus B(aD)}$ where $D$ is such that $\int_{|z|\leq D} \rho(z)dz \geq \beta, 1-\beta $.
%
%
%
%

\subsection{Surface tensors}\label{surfacet}
In this section we define local algorithms based on $2\times \dotsm \times 2$ configurations for the surface tensors $\Phi^{r,s}_{d-1}(X)$. For gentle sets, these take the form
\begin{equation*}
\Phi^{r,s}_{d-1}(X)= \frac{1}{r!s!}\frac{2}{\omega_{s+1}}\int_{\partial X} x^r u^s \Ha^{d-1}(dx).
\end{equation*}
Identifying $\R^d$ with its dual, it is enough to determine all their evaluations on a basis $v_1,\dots, v_d$,
\begin{equation}\label{tensor}
\Phi^{r,s}_{d-1}(X)(v_{i_1},\dots, v_{i_{r+s}})= \frac{1}{r!s!}\frac{2}{\omega_{s+1}}\int_{\partial X} \prod_{k=1}^r \langle x,v_{i_k}\rangle \prod_{l=r+1}^{r+s} \langle u(x),v_{i_l}\rangle \Ha^{d-1}(dx)
\end{equation}
for all choices of ${i_1},\dots, {i_{r+s}} \in \{1,\dots ,d\}$.  Hence it is enough to estimate \eqref{tensor} for each tuple ${i_1},\dots, {i_{r+s}}$. 
As basis we choose the vectors $v_1,\dots, v_d$ spanning $\La$. Let $V=\max\{|v_i|, i=1\dots, d\}$.

As in the case of surface area estimators, this requires some assumptions on the $PSF$:
\begin{enumerate}[label=(\roman*)]
 \item \label{i} $\rho$ is rotation invariant $\rho(x)=\rho(|x|)$. In this case, $\theta_u(t):=\theta(t)$ is independent of $u$ and 
\begin{equation*}
\theta_u(t;S)= \{\theta(t+\langle u,s \rangle)\}_{s\in S}.
\end{equation*} 

\item \label{ii} $\theta$ is strictly decreasing on $\theta^{-1}(0,1)$. In this case the inverse exists on $(0,1)$ and we denote this by $\varphi$.

\item \label{iii} The lattice is so fine compared to the support of $\rho$ that $\theta^{-1}(0,1)$ contains an interval of the form $[\beta-V, \omega + V]$ where $\beta < \omega$. In particular, $\varphi$ is well-defined on $[\beta-V, \omega + V]$.
\end{enumerate}
Note that (i) and (ii) are satisfied for both the Gaussian and the Airy disk.

Under these conditions, observe that 
\begin{equation}\label{coordinates}
\varphi(\theta_u(t+\langle v_i, u \rangle ))-\varphi(\theta_u(t)) = \langle u, v_i \rangle.
\end{equation}
for $t\in [\beta,\omega]$. Let $S=\{0,v_1,\dots,v_d\}\subseteq C_{0,0}^2$ and $A=[\beta, \omega] \times \bigtimes_{s\in S \backslash \{0\}} [\beta-V, \omega + V]$.
Define the weight function
\begin{equation}\label{deff}
f\big(\{\theta_s\}_{s\in S},x \big) = \mathds{1}_{ A}\big(\{\theta_s\}_{s\in S}\big) \frac{1}{r!s!}\frac{2}{\omega_{s+1}}\prod_{k=1}^{r} \langle x, v_{i_k} \rangle \prod_{l=r+1}^{r+s} \big(\varphi(\theta_{v_{i_l}})-\varphi(\theta_0)\big)
\end{equation}
This requires of course that $\varphi$ is known, or equivalently, the blurring of a halfspace $\theta$.

Applying Theorem \ref{2ndgrey} to the local estimator with weight function \eqref{deff} yields:
\begin{cor}
Let $X$ be a gentle set and suppose $\rho$ satisfies Condition \ref{i}--\ref{iii}. If $f$ is as in \eqref{deff}, then 
\begin{equation*}
\lim_{a\to 0 } E\Phi^f_{d-1}(X) = 
(\varphi(\beta)- \varphi(\omega) )\frac{1}{r!s!}\frac{2}{\omega_{s+1}} \int_{\partial X}\prod_{k=1}^{r} \langle x, v_{i_k} \rangle \prod_{l=r+1}^{r+s} \langle u(x), v_{i_l}  \rangle \Ha^{d-1}(dx). 
\end{equation*}
Since $\varphi$ is strictly decreasing, $(\varphi(\beta)- \varphi(\omega) )> 0$. Dividing by this factor thus yields an asymptotically unbiased estimator for \eqref{tensor}. 
\end{cor}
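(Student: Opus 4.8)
The plan is to apply Theorem~\ref{2ndgrey} to the local estimator $\hat\Phi^f_{d-1}$ with the weight function $f$ from \eqref{deff}, and then to simplify the three resulting boundary integrals. First I would record that $f$ factors as $f(\Theta,x) = h(\Theta)g(x)$ where $g(x) = \prod_{k=1}^r \langle x, v_{i_k}\rangle$ is a polynomial (hence $C^1$) and $h$ involves only the coordinates indexed by $S\setminus\{0\}$ together with the indicator of the box $A$; in particular $A = \bigtimes_{s\in S} I_s$ with $I_0 = [\beta,\omega]$ and $I_s = [\beta-V,\omega+V]$ for $s\neq 0$, and by Conditions~\ref{i}--\ref{iii} the endpoints of each $I_s$ are regular values, so the hypotheses of Theorem~\ref{2ndgrey} are met. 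By \eqref{mean} the mean estimator equals $a^{s-d}\int_{\R^d} f(\Theta^X_a(x;aS),x)\,dx$ with $q = s$; here we are estimating a surface tensor so effectively $q-d = -1$ in the relevant normalisation, meaning the asymptotics are governed by the \emph{first order} term of Theorem~\ref{1stgrey}, and the correction terms of Theorem~\ref{2ndgrey} contribute at the next order in $a$ and therefore \emph{do not} affect the limit. So in fact the cleaner route is: apply Theorem~\ref{1stgrey} (or rather its Corollary, since $f$ has an indicator factor and thus $h$ is only bounded measurable, not continuous), giving
\[
\lim_{a\to 0} E\hat\Phi^f_{d-1}(X) = \int_{\partial X}\int_{t_0^S}^{t_1^S} h(\Theta_u(t;S))\,dt\; g(x)\,\Ha^{d-1}(dx),
\]
provided the boundary-measure-zero condition $\mu^{X,g}(\partial A)=0$ holds, which follows because the endpoints of the $I_s$ are regular values (so the level sets $\{\theta(t+\langle u,s\rangle) = \partial I_s\}$ are finite in $t$).

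The core computation is then the inner integral $\int_{t_0^S}^{t_1^S} h(\Theta_u(t;S))\,dt$ for fixed $x\in\partial X$ with outward normal $u = u(x)$. By rotation invariance (Condition~\ref{i}), $\theta_u(t+\langle s,u\rangle) = \theta(t+\langle s,u\rangle)$, and the membership condition $\Theta_u(t;S)\in A$ unpacks to $\theta(t)\in[\beta,\omega]$ together with $\theta(t+\langle v_i,u\rangle)\in[\beta-V,\omega+V]$ for each $i$; since $|\langle v_i,u\rangle|\le V$ and $\theta$ is decreasing, $\theta(t)\in[\beta,\omega]$ already forces $\theta(t+\langle v_i,u\rangle)\in[\beta-V,\omega+V]$ — wait, that needs the uniform continuity / the interval choice in Condition~\ref{iii}; more precisely $\varphi$ being defined on $[\beta-V,\omega+V]$ is what guarantees $t+\langle v_i,u\rangle$ stays in $\theta^{-1}(0,1)$. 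So the effective domain of integration is $\{t : \theta(t)\in[\beta,\omega]\} = [\varphi(\omega),\varphi(\beta)]$, i.e.\ $t_0^S = \varphi(\omega)$ and $t_1^S = \varphi(\beta)$ (using that $\theta$ is strictly decreasing). On this interval, the key identity \eqref{coordinates} gives $\varphi(\theta_{v_{i_l}}) - \varphi(\theta_0) = \varphi(\theta(t+\langle v_{i_l},u\rangle)) - \varphi(\theta(t)) = \langle u, v_{i_l}\rangle$, which is \emph{independent of $t$}. Hence $h(\Theta_u(t;S)) = \frac{1}{r!s!}\frac{2}{\omega_{s+1}}\prod_{l=r+1}^{r+s}\langle u, v_{i_l}\rangle$ is constant in $t$ on $[t_0^S,t_1^S]$, and the inner integral is simply this constant times the length $t_1^S - t_0^S = \varphi(\beta) - \varphi(\omega)$.

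Assembling, $\lim_{a\to 0} E\hat\Phi^f_{d-1}(X)$ equals
\[
(\varphi(\beta)-\varphi(\omega))\,\frac{1}{r!s!}\frac{2}{\omega_{s+1}}\int_{\partial X}\prod_{k=1}^r\langle x,v_{i_k}\rangle\prod_{l=r+1}^{r+s}\langle u(x),v_{i_l}\rangle\,\Ha^{d-1}(dx),
\]
which is exactly $(\varphi(\beta)-\varphi(\omega))$ times \eqref{tensor}. Since $\varphi$ is strictly decreasing and $\beta<\omega$, the factor $\varphi(\beta)-\varphi(\omega)>0$, so dividing the estimator by this known constant produces an asymptotically unbiased estimator of $\Phi^{r,s}_{d-1}(X)(v_{i_1},\dots,v_{i_{r+s}})$; ranging over all index tuples recovers the full tensor. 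I expect the main obstacle to be handling the indicator factor $\mathds{1}_A$ rigorously: one must verify the regularity-of-endpoints hypothesis translates into $\mu^{X,g}(\partial A)=0$ so that the weak-convergence Corollary applies despite $h$ being discontinuous, and one must confirm that the $t$-interval on which $\Theta_u(t;S)\in A$ is genuinely the clean interval $[\varphi(\omega),\varphi(\beta)]$ with no spurious contributions from the auxiliary coordinates — this is where Condition~\ref{iii}'s margin $V$ is doing essential work and should be invoked carefully.
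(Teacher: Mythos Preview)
Your proposal is correct and in fact more careful than the paper's own treatment. The paper simply states that the corollary follows by ``applying Theorem~\ref{2ndgrey}'', but that theorem requires $X$ to be $r$-regular and $\rho$ to be continuous with compact support, whereas the corollary is asserted for gentle sets. You correctly identify that only the first-order result (Theorem~\ref{1stgrey}, via its weak-convergence Corollary to accommodate the indicator factor) is needed, and that it is the result that actually matches the hypotheses. Your computation of the inner $t$-integral via the identity~\eqref{coordinates}, showing the integrand is constant in $t$ and the domain is exactly $[\varphi(\omega),\varphi(\beta)]$, is the essential step and is carried out cleanly.

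Two minor remarks. First, your parenthetical ``$q=s$'' is a slip: the estimator is $\hat\Phi^f_{d-1}$, so $q=d-1$ and $q-d=-1$, which is indeed the surface scaling---your conclusion is right even though the label is off. Second, your caution about the auxiliary interval constraints is well placed: the paper's Condition~\ref{iii} is stated somewhat loosely (the interval $[\beta-V,\omega+V]$ sits inside $\theta^{-1}(0,1)$, which mixes $t$-values and $\theta$-values), and what is really needed is that whenever $\theta(t)\in[\beta,\omega]$ the shifted values $\theta(t+\langle v_i,u\rangle)$ automatically lie in the interval $I_{v_i}$ and in the domain of $\varphi$. You have the right idea that the margin $V\ge|\langle v_i,u\rangle|$ is what guarantees this, and spelling it out would make the argument fully rigorous.
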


For $r$-regular sets, a formula for the first order bias is given by Theorem \ref{2ndgrey}. Using $3\times \dotsm \times 3$ configurations instead, we can make the first order bias vanish. Let $S=\{0, \pm v_1,\dots,\pm v_d\}\subseteq C_{v,0}^3$ where $v=v_1+\dotsm + v_d$. Consider the weight function
\begin{equation}
\begin{split}\label{deff2}
f\big(\{\theta_s\}_{s\in S} ,x\big) = {}&\mathds{1}_{A}\big(\{\theta_s\}_{s\in S}\big) \frac{1}{r!s!}\frac{2}{\omega_{s+1}}\prod_{k=1}^{r} \langle x, v_{i_k} \rangle\\
&\times \bigg(\prod_{l=r+1}^{r+s} (\varphi(\theta_{v_{i_l}})-\varphi(\theta_0)) +  \prod_{l=r+1}^{r+s} (\varphi(\theta_0)-\varphi(\theta_{-v_{i_l}}))\bigg)
\end{split}
\end{equation}
where
\begin{equation*}
A=[\beta, 1-\beta ] \times \bigtimes_{s\in S\backslash \{0\}} [\beta-V-\eps, 1-\beta + V+\eps]
\end{equation*}
for $\eps>0$ so small that $[\beta-V-\eps, 1-\beta + V+\eps]\subseteq \theta^{-1}(0,1)$. Then Theorem \ref{2ndgrey} yields:

\begin{cor}
Let $X$ be an $r$-regular set and suppose $\rho$ satisfies Condition \ref{i}--\ref{iii}. If $f$ is as in \eqref{deff2}, then 
\begin{equation*}
E\Phi^f_{d-1}(X) = 
(\varphi(\beta)- \varphi(\omega) ) \frac{1}{r!s!}\frac{2}{\omega_{s+1}}\int_{\partial X}\prod_{k=1}^{r} \langle x, v_{i_k} \rangle \prod_{l=r+1}^{r+s} \langle u(x), v_{i_l}  \rangle \Ha^{d-1}(dx)+o(a). 
\end{equation*}
\end{cor}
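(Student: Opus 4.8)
The plan is to apply Theorem \ref{2ndgrey} directly to the local estimator $\hat\Phi^f_{d-1}$ whose weight function $f$ is given by \eqref{deff2}, and to verify that the first two of the three contributions in the asymptotic expansion cancel, leaving only the term that reproduces \eqref{tensor}. First I would check the hypotheses of Theorem \ref{2ndgrey}: $X$ is $r$-regular and $\rho$ is continuous and compactly supported by assumption; the configuration set is $S=\{0,\pm v_1,\dots,\pm v_d\}$, finite as required; and $A$ is a box $\bigtimes_{s\in S} I_s$ with $I_0=[\beta,1-\beta]$ and $I_s=[\beta-V-\eps,1-\beta+V+\eps]$ for $s\neq 0$, all of whose endpoints are regular values by Condition \ref{ii} together with the choice of $\eps$ (since $\theta'<0$ throughout $\theta^{-1}(0,1)$, every value in the open interval is regular). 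Finally $f$ is $C^1$ on $A\times\R^d$ because $\varphi$ is $C^1$ on $[\beta-V-\eps,1-\beta+V+\eps]$ (the inverse of a $C^1$ function with non-vanishing derivative) and the remaining factors are polynomial in $x$.

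The leading behaviour is governed by the first-order formula: by Theorem \ref{1stgrey} (or its corollary), $a^{-1}\int f(\Theta^X_a(x;aS),x)\,dx$ converges to $\int_{\partial X}\int_{\R} f(\Theta_u(t;S),x)\,dt\,\Ha^{d-1}(dx)$. On the support of $\mathds{1}_A$ we have, using rotation invariance (Condition \ref{i}) and identity \eqref{coordinates}, that $\varphi(\theta_{v_{i_l}}(t))-\varphi(\theta_0(t))=\langle u,v_{i_l}\rangle$ and likewise $\varphi(\theta_0(t))-\varphi(\theta_{-v_{i_l}}(t))=\langle u,v_{i_l}\rangle$; hence the two products inside the big parenthesis of \eqref{deff2} are equal, each giving $\prod_{l}\langle u,v_{i_l}\rangle$, and the constraint $\Theta_u(t;S)\in A$ holds exactly when $t\in[\beta,1-\beta]$ in the $\theta_0$-coordinate, i.e.\ for $t$ in an interval of length $\varphi(\beta)-\varphi(1-\beta)$ after the substitution $t\mapsto\theta_0(t)$. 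This yields the stated leading term with the factor $(\varphi(\beta)-\varphi(\omega))$, where $\omega=1-\beta$; so $a^{-1}\int f\,dx$ times $a$ recovers the leading term, and what remains is to show the $O(a)$ correction from Theorem \ref{2ndgrey} vanishes, which is precisely the $o(a)$ claim in the corollary.

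The heart of the argument is therefore the vanishing of the three second-order contributions in Theorem \ref{2ndgrey}. The key point is that \eqref{deff2} is built to be \emph{odd} under the reflection $s\mapsto -s$, $t\mapsto -t$ in the relevant sense. Concretely: the curvature term $\int_{\partial X}\int_{t_0^S}^{t_1^S} t\,f(\Theta_u(t;S),x)\,dt\,\tr(\II_x)\,\Ha^{d-1}(dx)$ vanishes because, after reducing $f$ on the support of $\mathds 1_A$ to $\frac{1}{r!s!}\frac{2}{\omega_{s+1}}\prod_k\langle x,v_{i_k}\rangle\cdot 2\prod_l\langle u,v_{i_l}\rangle$, the integrand in $t$ is (to the order that matters) an even function of $t$ multiplied by $t$, hence integrates to zero over the symmetric interval $[t_0^S,t_1^S]$ — here one uses that $t_0^S=-t_1^S$ because $\theta_0(-t)$ and $\theta_0(t)$ are symmetric about $\theta_0(0)=1/2$ for a rotation-invariant $\rho$ applied to a halfspace. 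For the gradient term $\int\int(\langle\nabla^1 f,\Theta^{Q_x}(t;S)\rangle + t\langle\nabla^2 f,u\rangle)\,dt$, the second summand vanishes by the same $t$-oddness, while the first vanishes because the two products in \eqref{deff2} contribute $\nabla^1$-derivatives of opposite sign against $\Theta^{Q_x}$ (the $\theta^{Q_x}(t,s)=-\tfrac12\int_{u^\perp}\II_x(z)\rho(z-tu-s)\,dz$ is, crucially, even in the pair $(t,s)$, so the symmetrization $s\mapsto -s$, $t\mapsto -t$ built into \eqref{deff2} forces cancellation). Finally the boundary term $\int_{\partial X}(f(\Theta_u(t_1^S;S),x)\psi_1^S(x) - f(\Theta_u(t_0^S;S),x)\psi_0^S(x))\,\Ha^{d-1}(dx)$ vanishes because by symmetry $\psi_0^S(x)=\psi_1^S(x)$ (the quantities $\theta^{Q_x}(t_0^S,s)$ and $\theta^{Q_x}(t_1^S,-s)$ agree, and $\theta_u'(t_0^S+\langle s,u\rangle)=\theta_u'(t_1^S-\langle s,u\rangle)$ by evenness of $\theta'$ about $t=0$... more precisely oddness of $t\mapsto\theta_0(t)-1/2$ makes $\theta_0'$ even), while $f(\Theta_u(t_1^S;S),x)=f(\Theta_u(t_0^S;S),x)$ on the boundary since both equal $\frac{1}{r!s!}\frac{2}{\omega_{s+1}}\prod_k\langle x,v_{i_k}\rangle\cdot 2\prod_l\langle u,v_{i_l}\rangle$ independently of which endpoint is used.

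The main obstacle I anticipate is bookkeeping the parity statements precisely: one must pin down that for a rotation-invariant PSF and a halfspace through the origin, $\theta_0(t)+\theta_0(-t)=1$, hence $\theta_0'$ is even and the $A$-constraint interval in $t$ is symmetric, and then propagate this symmetry through the definitions of $t_0^S,t_1^S$, $\psi_0^S,\psi_1^S$, and the quadratic-correction function $\theta^{Q_x}(t,s)$ — in particular verifying $\theta^{Q_x}(-t,-s)=\theta^{Q_x}(t,s)$ from the substitution $z\mapsto -z$ in $-\tfrac12\int_{u^\perp}\II_x(z)\rho(z-tu-s)\,dz$ together with $\II_x(-z)=\II_x(z)$. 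Once these symmetries are established, each of the three terms of Theorem \ref{2ndgrey} is seen to be an integral over $\partial X$ of a pointwise-in-$x$ expression that is antisymmetric under the reflection, hence zero, and the corollary follows. A secondary, routine check is that the extra slack $\eps$ in $A$ (needed so that all $I_s$-endpoints stay regular values and so that $\varphi$ is $C^1$ up to the boundary) does not affect the constraint interval in the $\theta_0$-coordinate, which is still exactly $[\beta,1-\beta]$ since $I_0=[\beta,1-\beta]$ is the binding constraint; this is why the leading factor is $\varphi(\beta)-\varphi(\omega)$ and not something wider.
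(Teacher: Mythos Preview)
Your approach is correct and matches the paper's: the corollary is stated with no proof beyond ``Then Theorem \ref{2ndgrey} yields'', and the parity verification you outline (using $\theta(t)+\theta(-t)=1$ to obtain $t_0^S=-t_1^S$ and $\theta'$ even, and $\theta^{Q_x}(-t,-s)=\theta^{Q_x}(t,s)$ to kill all three second-order contributions) is precisely the computation the reader is meant to supply. One minor remark: the two equal products in \eqref{deff2} give an extra factor of $2$ in the leading term which the stated corollary omits --- this is a slip in the paper, not in your reasoning.
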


\begin{rem}
More generally, $u$ is determined by its coordinates \eqref{coordinates} in the basis $v_1,\dots ,v_d$. This can be used in a similar way to find estimators for integrals of the form
\begin{equation*}
\int_{\partial X} f(x,u(x)) \Ha^{d-1}(dx).
\end{equation*}
 \end{rem}

\begin{rem}
Since $\tr \big(\Phi_{d-1}^{0,2}(X)\big) $ is just the surface area of $X$ up to a constant factor, the above also yields a new surface area estimator. Taking larger configurations into account than the surface area estimators in \cite{am4}, one could hope for a better precision. On the other hand, this new estimator requires more knowledge about the underlying PSF and is hence harder to apply in practice.
\end{rem}

\begin{rem}
It is known that asymptotically unbiased local surface area estimators from black-and-white images do not exist \cite{am3}. Tensors of the form $\Phi_{d-1}^{r,1}$ can be estimated, but in general, asymptotically unbiased local estimators for $\Phi^{r,s}_{d-1}$ are not expected to exist for $s>0$. 
\end{rem}

%
%
%
%

\subsection{Mean curvature tensors}
We similarly obtain estimators for tensors of the form $\Phi_{d-2}^{r,0}$. Let $\beta\in \big(0,\frac{1}{2}\big)$ and let $g:[\beta,1-\beta] \to \R$ be a $C^1$ function satisfying $g(x)=-g(1-x)$. Define
\begin{equation}\label{fg}
f(\theta_0,x)= g(\theta_0)x^r. 
\end{equation}
This defines a local estimator $\hat{\Phi}_{d-2}^{f}$.

Theorem \ref{1stgrey} and \ref{2ndgrey} yield:
\begin{cor}
Suppose $X$ is a compact $r$-regular set and $\rho$ is continuous with compact support and satisfies Condition \ref{i}--\ref{ii} in Section \ref{surfacet}. With $f$ as in \eqref{fg}
\begin{equation*}
\lim_{a\to 0} E\hat{\Phi}_{d-2}^{f}(X)=2\pi r! (c_1+c_2+c_3)\Phi_{d-2}^{r,0}(X) + r!\int_{-\varphi(\beta)}^{\varphi(\beta)} t g(\theta(t)) dt \Phi_d^{r,0}(X).
\end{equation*}
where the constants $c_1,c_2,c_3 \in  \R $ are as in \cite[Section 6.2]{am4}.
\end{cor}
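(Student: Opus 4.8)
The plan is to recognise $\hat{\Phi}^{f}_{d-2}$ as a $1\times\dots\times 1$ estimator (the configuration is $S=\{0\}$) at the exponent $q=d-2$, and to reduce the computation to the second order formula, Theorem \ref{2ndgrey}, after checking that the first order term drops out. Working coordinatewise — evaluating the $\mathbb{T}^r$-valued weight on a tuple $(v_{i_1},\dots,v_{i_r})$ turns it into the scalar $C^1$ weight $g(\theta_0)\prod_k\langle x,v_{i_k}\rangle$, to which the theorems apply — formula \eqref{mean} gives $E\hat{\Phi}^{f}_{d-2}(X)=a^{-2}\int_{\R^d}g(\theta^X_a(z))\,z^r\,dz$. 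By Theorem \ref{1stgrey} the corresponding first order quantity is $\lim_{a\to0}a^{-1}\int_{\R^d}g(\theta^X_a(z))\,z^r\,dz=\bigl(\int_{-\varphi(\beta)}^{\varphi(\beta)}g(\theta(t))\,dt\bigr)\int_{\partial X}x^r\,\Ha^{d-1}(dx)$, where Condition \ref{i} has been used to write $\theta_u=\theta$ and $\theta(-t)=1-\theta(t)$, so that the effective $t$-range $\{t:\theta(t)\in[\beta,1-\beta]\}$ is the symmetric interval $[-\varphi(\beta),\varphi(\beta)]$; the hypothesis $g(\xi)=-g(1-\xi)$ then makes $t\mapsto g(\theta(t))$ odd, so this integral is $0$. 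Hence the first order term vanishes and $\lim_{a\to0}E\hat{\Phi}^{f}_{d-2}(X)$ equals the three-term right-hand side of Theorem \ref{2ndgrey} (the remaining hypotheses — $X$ $r$-regular, $\rho$ continuous with compact support, $A=[\beta,1-\beta]$ a single closed interval with regular endpoints, $f\in C^1$ — being clear).

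Next I would evaluate those three terms for $S=\{0\}$, with $\Theta_u(t;\{0\})=\{\theta(t)\}$, $t^S_0=-\varphi(\beta)$, $t^S_1=\varphi(\beta)$, $\nabla^1 f=g'(\theta_0)x^r$ and $\nabla^2 f=g(\theta_0)\nabla_x(x^r)$. The $t\tr(\II)$-term is $\bigl(\int_{-\varphi(\beta)}^{\varphi(\beta)}t\,g(\theta(t))\,dt\bigr)\int_{\partial X}x^r\tr(\II_x)\,\Ha^{d-1}(dx)$. For the $\nabla^1 f$-term I use $\theta^{Q_x}(t,0)=-\tfrac{1}{2}\int_{u^\perp}\II_x(z)\rho(z-tu)\,dz$ and rotation invariance of $\rho$ again: the off-diagonal contributions of $\II_x$ integrate to $0$ and the squares average, so $\theta^{Q_x}(t,0)=-\tfrac{1}{2(d-1)}\tr(\II_x)\int_{u^\perp}|z|^2\rho\bigl(\sqrt{|z|^2+t^2}\bigr)\,dz$, whence this term is again a scalar (depending on $g$ and $\rho$) times $\int_{\partial X}x^r\tr(\II_x)\,\Ha^{d-1}$. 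For the boundary term, $\psi^S_\nu(x)=-\theta^{Q_x}(t^S_\nu,0)/\theta'(t^S_\nu)$ (a single ratio as $|S|=1$), again proportional to $\tr(\II_x)$; the symmetry $\theta(-t)=1-\theta(t)$ gives $\psi^S_0=\psi^S_1$ while $f(\Theta_u(t^S_1;S),x)=g(\beta)x^r=-f(\Theta_u(t^S_0;S),x)$, so the boundary term too is a multiple of $\int_{\partial X}x^r\tr(\II_x)\,\Ha^{d-1}$. Finally the $t\langle\nabla^2 f,u\rangle$-term separates as $\bigl(\int_{-\varphi(\beta)}^{\varphi(\beta)}t\,g(\theta(t))\,dt\bigr)\int_{\partial X}\langle\nabla_x(x^r),u(x)\rangle\,\Ha^{d-1}(dx)$.

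To finish, I would add the three $\tr(\II)$-contributions, write their combined coefficient as $2\pi(c_1+c_2+c_3)$ — these being precisely the constants of \cite[Section 6.2]{am4}, since that computation is the $r=0$ case of the present one — and then use that for an $r$-regular set the $(d-2)$nd curvature measure is $\tfrac{1}{2\pi}\tr(\II)\,\Ha^{d-1}|_{\partial X}$ pushed forward along $x\mapsto(x,u(x))$, so that $\int_{\partial X}x^r\tr(\II_x)\,\Ha^{d-1}=2\pi r!\,\Phi^{r,0}_{d-2}(X)$; this yields the term $2\pi r!(c_1+c_2+c_3)\Phi^{r,0}_{d-2}(X)$. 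For the $\langle\nabla^2 f,u\rangle$-term I would apply the Gauss--Green (divergence) theorem coordinatewise on the (Lipschitz) boundary $\partial X$: $\int_{\partial X}\langle\nabla_x(x^r),u(x)\rangle\,\Ha^{d-1}(dx)=\int_X\Delta(x^r)\,dx$, which re-expressed through the volume tensors of $X$ gives the remaining summand $r!\int_{-\varphi(\beta)}^{\varphi(\beta)}t\,g(\theta(t))\,dt\,\Phi^{r,0}_d(X)$. The only genuinely non-mechanical input is in the first step: the condition $g(\xi)=-g(1-\xi)$ is exactly what annihilates the leading surface(-tensor) contribution and places the estimator at the $a^{-2}$ scale where Theorem \ref{2ndgrey} can be used. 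After that the work is a bookkeeping of the three terms; the feature not present in \cite{am4} is that the position weight $x^r$ splits into a curvature part (factored out of the surface integral) and a volume part coming from its spatial gradient — the $\langle\nabla^2 f,u\rangle$ term — and the point to be careful with is the conversion of that term, via the divergence theorem, into a volume tensor.
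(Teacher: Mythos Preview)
Your proposal is correct and follows essentially the same route as the paper: use the antisymmetry $g(\xi)=-g(1-\xi)$ together with $\theta(-t)=1-\theta(t)$ to kill the first-order surface term, apply Theorem~\ref{2ndgrey} with $S=\{0\}$, identify the three $\tr(\II)$-contributions (which, being the $r=0$ computation of \cite[Section~6.2]{am4} with the factor $x^r$ carried through, produce the constants $c_1+c_2+c_3$), and convert the new $\langle\nabla^2 f,u\rangle$-term into a volume expression via the divergence theorem. Your added details---the explicit oddness argument, the computation $\theta^{Q_x}(t,0)=-\tfrac{\tr(\II_x)}{2(d-1)}\int_{u^\perp}|z|^2\rho(\sqrt{|z|^2+t^2})\,dz$, and the symmetry $\psi_0^S=\psi_1^S$---are exactly the verifications the paper's one-line proof leaves implicit.
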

This follows by rewriting the limit in Theorem \ref{2ndgrey} exactly as in \cite{am4}. The $\Phi_d^{r,0}(X)$-term comes from  the $\nabla^2$-term by an application of the divergence theorem.  We already found asymptotically unbiased estimators for volume tensors in Subsection~\ref{volume}, so this can be corrected for. 
Estimators for which $c_1+c_2+c_3 \neq 0 $ are suggested in \cite[Section 6.2]{am4}. 
%
%
For instance, this is the case for $g(\theta)= (\theta-\frac{1}{2})\mathds{1}_{[\beta,1-\beta]}(\theta)$ and $g(\theta)= \mathds{1}_{[\beta,\frac{1}{2}]}(\theta)-\mathds{1}_{[\frac{1}{2},1-\beta]}(\theta)$ and for suitable values of $\beta$.

The remaining mean curvature tensors seem to be harder to get a hold of, since the asymptotic mean involves the surface normals in a more involved way than in the case of surface tensors. 

\section{Acknowledgements}
The author was funded by a grant from the Carlsberg Foundation and hosted by the Institute of Stochastics at Karlsruhe Institute of Technology. The author would also like to thank Markus Kiderlen for helpful input and suggestions.

\end{document}